\newcommand{\Om}{{\Omega}}
\newcommand{\aaa}{{\mathcal A}}
\newcommand{\ttt}{\mathbb T}
\newcommand{\cM}{{\mathcal M}}
\newcommand{\T}{{\mathcal T}}
\newcommand{\R}{{\mathbb R}}
\newcommand{\Q}{{\mathbb Q}}
\newcommand{\Torus}{{\mathbb T}}
\newcommand{\Z}{{\mathbb Z}}
\newtheorem{thm}{Theorem}[section]
\newtheorem*{thm*}{Theorem}
\newtheorem{con}[thm]{Conjecture}
\newtheorem{lem}[thm]{Lemma}
\newtheorem{dfn}[thm]{Definition}
\theoremstyle{definition}
\newtheorem{ex}{Example}
\begin{document}
\title{Small cocycles, fine torus fibrations,
and a $\Z^2$ subshift with neither}

\author{Alex Clark}
\address{Alex Clark, Department of Mathematics, University of Leicester, University Road, Leicester LE1 7RH, United Kingdom}
\email{Alex.Clark@le.ac.uk}

\author{Lorenzo Sadun}
\address{Lorenzo Sadun, Department of Mathematics, University of Texas,
Austin, TX 78712, USA}
\email{sadun@math.utexas.edu}
\date{\today}
\subjclass{Primary 37A20, 37B50
; Secondary 37A55, 37B10 and 37C85}
\begin{abstract}
  Following an earlier similar conjecture of Kellendonk and Putnam, Giordano, Putnam and Skau conjectured that all minimal, free $\Z^d$
  actions on Cantor sets admit ``small cocycles.''   These represent
  classes in $H^1$ that are mapped to small vectors in $\R^d$ by the
  Ruelle-Sullivan (RS) map. We show that there exist $\Z^2$ actions
  where no such small cocycles exist, and where the image of $H^1$
  under RS is $\Z^2$.  Our methods involve tiling spaces and shape
  deformations, and along the way we prove a relation between the
  image of RS and the set of ``virtual eigenvalues'', i.e. elements of
  $\R^d$ that become topological eigenvalues of the tiling flow after
  an arbitrarily small change in the shapes and sizes of the tiles.
\end{abstract}

\maketitle

\setlength{\baselineskip}{.6cm}

\section{Introduction and statement of results}

In this paper we consider cohomological properties of minimal, free
$\Z^d$ actions on Cantor sets. In particular, we consider the first
group cohomology of a $\Z^d$ action, and its image under the
Ruelle-Sullivan map. Kellendonk and Putnam~\cite[Conj. 16]{KP}
conjectured (under the additional assumption of unique ergodicity of
the action) that the image of the Ruelle-Sullivan map is always dense
in the dual space $(\R^d)^*$ to $\R^d.$ Giordano, Putnam and Skau
\cite{GPSCocycle} conjectured (without the unique ergodicity
assumption) that the image of this map is always dense in the dual
space $(\R^d)^*$ to $\R^d$, and further that there always exist
``small, positive cocycles''.  Giordano, Putnam and Skau
\cite{GPSCocycle} showed that a number of interesting consequences
would follow from these conjectures, including the existence of a
free, minimal $\Z$ action that is not orbit-equivalent to any $\Z^2$
action.

In this paper we show that all these conjectures are in fact incorrect.

\begin{thm}[Theorem \ref{mainthm}] \label{nogo} There exists a free,
  minimal and uniquely ergodic $\Z^2$ subshift that does not admit any
  small cocycles, for which the image of $H^1$ under the
  Ruelle-Sullivan map is merely the natural $\Z^2$ that comes from
  constant cochains.
\end{thm}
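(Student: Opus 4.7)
My plan is to construct an explicit $\Z^2$ subshift $X$ and invoke the relation (established earlier in the paper) between the image of the Ruelle--Sullivan map and the set of virtual eigenvalues of the associated tiling flow, reducing the theorem to arranging that the virtual eigenvalues of $X$ form exactly $\Z^2$.

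View the subshift as a tiling of $\R^2$ by unit square tiles labelled by letters of the alphabet. The natural $\Z^2$ inside the image of RS is generated by integer-valued cochains locally constant on each tile type, and its image under RS is manifestly $\Z^2 \subset \R^2$. To prove the theorem I therefore need (i) freeness, minimality, and unique ergodicity of $X$; and (ii) that no $\alpha \in \R^2 \setminus \Z^2$ is a virtual eigenvalue. I would take $X$ to be a primitive, aperiodic, recognisable $\Z^2$-substitution subshift, so that (i) follows from the usual properties of such substitutions.

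For (ii), recall that $\alpha$ is a virtual eigenvalue iff for every $\e > 0$ some shape deformation of norm at most $\e$ makes $e^{2\pi i \alpha \cdot x}$ a topological eigenfunction of the deformed flow. Continuous eigenfunctions are detected by the statistics of return vectors $\vecr$ to a clopen transversal: the phases $\alpha \cdot \vecr \pmod 1$ must be forced near $0$. Shape deformations alter return vectors linearly, the perturbation being parametrised by an element of $H^1(X;\R)$. I would engineer the substitution so that $H^1(X;\R)$ is essentially just the tautological $\Z^2 \otimes \R$ -- for instance, so that the substitution's action on $H^1$ has no non-integer rescaling eigenvalues beyond those already arising from constant cochains. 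Then the set of return vectors, even after any small shape deformation, remains dense in $\R^2$ modulo any non-integer sublattice, forcing $\alpha \cdot \vecr \pmod 1$ to be bounded away from $0$ on a positive-frequency set of returns and thus ruling $\alpha$ out as a virtual eigenvalue.

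The main obstacle is this last step: explicitly exhibiting a substitution whose cohomology is degenerate in the sense above, while simultaneously preserving primitivity, recognisability, and aperiodicity. A subshift with too much cohomology typically admits non-integer virtual eigenvalues via shape deformations; the construction must thread the needle between having enough combinatorial complexity to be aperiodic and uniquely ergodic, and being cohomologically rigid enough that the set of virtual eigenvalues collapses to the tautological $\Z^2$. I would anticipate that verifying the cohomological rigidity -- probably through an explicit Anderson--Putnam type complex analysis of the chosen substitution -- constitutes the technical heart of the argument.
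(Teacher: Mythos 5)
Your high-level strategy is sound as far as it goes: exhibit a free, minimal, uniquely ergodic $\Z^2$ subshift whose Ruelle--Sullivan image is exactly $\Z^2$, and note that this precludes small cocycles. But the proposal stops exactly where the proof has to begin. Everything after ``I would take $X$ to be a primitive, aperiodic, recognisable $\Z^2$-substitution subshift'' is a wish list, not an argument: no example is constructed, and the cohomological rigidity you correctly identify as the technical heart is never verified. Worse, the mechanism you propose for ruling out non-integer virtual eigenvalues --- that return vectors stay dense modulo any non-integer sublattice after a small shape deformation, so the phases $\alpha\cdot\vec{r}$ cannot be forced near $0$ --- is precisely the step the paper says it could \emph{not} make work: a pointwise-small deformation, integrated over the large distances that separate recurrences of a patch, produces non-negligible displacements of the return vectors, so density arguments of this kind do not survive deformation. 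This is why the paper leaves the virtual-eigenvalue statement as Conjecture \ref{shears} and instead proves $RS(H^1(\Omega_F))=\Z^2$ by working directly with integral cohomology classes.

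The actual proof runs through machinery your outline does not anticipate. The example is Frank's non-Pisot DPV fusion tiling, chosen because it admits \emph{shears} in both directions: along fault lines between infinite-order supertiles, the two half-planes can be offset by arbitrary integers. A closed integral PE $1$-form $\alpha$ with $RS([\alpha])=(\mu,\nu)$ is then analyzed by (a) using the shear to bound the fluctuation of $\int\alpha$ along horizontal lines just above and below a fault line; (b) passing to the one-dimensional substitution $\sigma_{\mathbf{n}}$ governing the supertile row along the fault and showing its space has no nontrivial asymptotically negligible classes (a Barge--Diamond computation), so that $\alpha=\mu\,dx+d\beta$ there; (c) invoking integrality of $[\alpha]$ to force $\mu W$ to be an integer for supertile widths $W$; and (d) using that these widths are relatively prime (a consequence of $\det M=-3$) to conclude $\mu\in\Z$, with $\nu\in\Z$ by symmetry. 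Note also that your proposed class of examples is probably the wrong one: an ordinary primitive $\Z^2$ block substitution whose supertiles meet full-edge to full-edge has no shears and typically has small cocycles (the paper's closing example produces classes with RS image $(0,2^{-n})$ exactly this way). The collapse of $H^1$ to $\Z^2$ is driven by the shears, which require the fault-line structure of a non-Pisot DPV, not just a cohomologically ``rigid'' substitution complex.
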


In fact, $H^1$ of this $\Z^2$ subshift is itself equal to $\Z^2$
(Theorem \ref{icing-on-the-cake}).

Before developing this counterexample, we consider the significance of
the image of the Ruelle-Sullivan map. Here we work in the setting of
$\R^d$ actions on tiling spaces. $\Z^d$ actions on Cantor sets,
subshifts, and tilings with finite local complexity (FLC) are closely
related. Every expansive $\Z^d$ action can be realized as a subshift,
the suspension of the $\Z^d$ action on a subshift is a tiling space
with FLC, and every tiling space with FLC is homeomorphic to the
suspension of a subshift \cite{SW}.  As a result, topological theorems
about each of these categories can give important insights into the
other two.

We then relate the first cohomology of a tiling space to spectral
theory, and to realizations of that tiling space as a Cantor bundle
over a torus.

\begin{dfn} If $\Omega$ is an aperiodic tiling space with FLC, and if $\lambda
  \in (\R^d)^*$, we say that $\lambda$ is a {\bf virtual eigenvalue}
  of the natural $\R^d$ action if there exist arbitrarily small
  changes to the shapes and sizes of the tiles (aka arbitrarily small
  time changes) such that $\lambda$ is a topological eigenvalue of the
  resulting $\R^d$ actions.
\end{dfn}

Shape changes can also be used to make the translation dynamics
topologically conjugate to the natural translation action on a Cantor
bundle over a torus. (Henceforth, all Cantor bundles over tori will
be assumed to carry this action.)
Indeed, this is how the authors of \cite{SW}
showed that all FLC tiling spaces are homeomorphic to suspensions of
subshifts.  In this paper we consider which tori can be bases of such
bundles after arbitrarily small shape changes. For uniquely ergodic
tiling spaces, the answer is especially simple:

\begin{thm} \label{bigtori} Let $\Omega$ be a uniquely ergodic  FLC
  tiling space whose natural $\R^d$ action is minimal and free,
and let $\lambda \in
  (\R^d)^*$. Then the following conditions are equivalent:
\begin{enumerate}
\item $\lambda$ is in the closure of the image of $H^1(\Omega)$ under the
  Ruelle-Sullivan map.
\item $\lambda$ is a virtual eigenvalue.
\item There is an arbitrarily small shape change that transforms
  $\Omega$ into a Cantor bundle over a torus $\R^d/L$, such that $\lambda$ is a
  period of the dual torus $(\R^d)^*/L^*$. (Here $L \subset \R^d$ is a lattice
and $L^* \subset (\R^d)^*$ is the dual lattice.)
\end{enumerate}
\end{thm}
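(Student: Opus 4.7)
My plan is to prove the cycle of implications $(3) \Rightarrow (2) \Rightarrow (1) \Rightarrow (3)$, with the Clark--Sadun theory of shape deformations as technical backbone.  Throughout I use the key fact that a deformation cochain $\vec{\alpha} \in H^1(\Omega, \R^d)$ with Ruelle--Sullivan image $A \in \mathrm{End}(\R^d)$ produces a tiling space homeomorphic to $\Omega$ but with the $\R^d$ action reparametrized; under this reparametrization both topological eigenvalues and the image of the RS map transform by $\mu \mapsto \mu \circ (I+A)^{-1}$, so small deformations perturb $\mathrm{Im}(RS)$ by small amounts.

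$(3) \Rightarrow (2)$ is essentially a definition chase: an $\R^d$-equivariant factor map from the deformed $\Omega$ to $\R^d/L$, composed with the character of $\R^d/L$ corresponding to $\lambda \in L^*$, produces a continuous eigenfunction with eigenvalue $\lambda$, and the arbitrary smallness of the shape change makes $\lambda$ a virtual eigenvalue.  For $(2) \Rightarrow (1)$ the key input is that in a uniquely ergodic FLC tiling space every topological eigenvalue lies in the image of the Ruelle--Sullivan map, since the argument of a continuous eigenfunction defines an integer cohomology class whose RS image is the eigenvalue.  Given deformations $\vec{\alpha}_n$ with $A_n = RS(\vec{\alpha}_n) \to 0$ making $\lambda$ a topological eigenvalue of each deformed space, the transformation formula then yields $\mu_n = \lambda \circ (I + A_n) \in \mathrm{Im}(RS_\Omega)$ with $\mu_n \to \lambda$, placing $\lambda$ in the closure of $\mathrm{Im}(RS_\Omega)$.

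The main obstacle is $(1) \Rightarrow (3)$.  Given $\lambda \in \overline{\mathrm{Im}(RS_\Omega)}$ I must construct an arbitrarily small shape change making $\Omega$ a Cantor bundle over some torus $\R^d/L$ with $\lambda \in L^*$.  My plan is: first, approximate $\lambda$ by some $\mu \in \mathrm{Im}(RS_\Omega)$ chosen so that, together with $d-1$ slight perturbations of the natural generators of $\mathrm{Im}(RS)$ coming from the constant cochains, $\mu$ generates a full-rank lattice $L^*_0 \subset (\R^d)^*$; second, use integer cocycles representing these $d$ generators to build a shape deformation whose integrated factor map realizes the deformed $\Omega$ as a Cantor bundle over $\R^d/L_0$; and third, apply a further small linear rescaling carrying $\mu$ to $\lambda$ exactly, which by the transformation formula preserves the bundle structure while tilting the torus to some $\R^d/L$ with $\lambda \in L^*$.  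The delicate step is the second, where one must verify that the formal integrations of the chosen integer cocycles give continuous, single-valued $\R^d/L_0$-valued functions on the deformed space; this is the real topological obstruction and the reason a shape change is needed at all.
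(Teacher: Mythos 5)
Your overall architecture matches the paper's: the paper also reduces statement (3) to completing $\lambda$ to a basis of $(\R^d)^*$, approximating that basis by Ruelle--Sullivan images of integral classes $[\alpha_1],\dots,[\alpha_d]$, and using the integrality of those classes to force all return vectors of a large patch into a lattice, which is exactly your ``single-valued torus map'' criterion. Your implications $(3)\Rightarrow(2)\Rightarrow(1)$ are also essentially the paper's $(2)\Rightarrow(3)\Rightarrow(1)$ in its Theorem \ref{newbigtori}: characters of $\R^d/L$ give eigenfunctions, and the winding form $\frac{-i\,d\psi}{2\pi\psi}$ of an eigenfunction gives an integral class whose RS image is close to $\lambda$.

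There is, however, one genuine gap in your $(1)\Rightarrow(3)$, and you have mislocated the delicate step. The single-valuedness of the map to $\R^d/L_0$ is not the obstruction: it follows immediately from integrality of the chosen classes (integrals of an integral class between corresponding points of two occurrences of a sufficiently large patch are integers, so the integrated coordinate is well defined mod the lattice). What your plan does not supply is the passage from ``the RS image (i.e.\ the \emph{spatial average}) of $\alpha_i$ is close to $\lambda_i$'' to ``the vector-valued $1$-form implementing the shape change is \emph{pointwise} close to the identity.'' An integral class with small average can have representatives with large pointwise fluctuations, in which case the deformation is not an arbitrarily small shape change and the integrated map $g(x)=\int_0^x M_0^{-1}\tilde\alpha$ need not even be a bijection of $\R^d$ (your own transformation formula $\mu\mapsto\mu\circ(I+A)^{-1}$ also presupposes this pointwise invertibility). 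The paper closes this gap by convolving $\alpha$ with a bump function $\rho_r$ of large support: unique ergodicity gives \emph{uniform} convergence of ergodic averages, so $\rho_r*\alpha$ is strongly pattern equivariant, cohomologous to $\alpha$, and pointwise $\epsilon$-close to the constant matrix $M$. This is the one place where unique ergodicity is used in an essential way in $(1)\Rightarrow(3)$, and your proposal needs it to make ``arbitrarily small'' literal.
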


Similar results apply to linearly independent sets of virtual
eigenvalues, and in particular to bases of eigenvalues.

\begin{thm} \label{morebigtori} Let $\Omega$ be a uniquely ergodic FLC
  tiling space whose natural $\R^d$ action is minimal and free,
and let $(\lambda_1,
  \ldots, \lambda_d)$ be a basis for $(\R^d)^*$. Let $L'= \lambda_1 \Z
  \oplus \cdots \oplus \lambda_d \Z$ be the lattice spanned by the
  $\lambda_i$'s, dual to a lattice $L \subset \R^d$, and let $\ttt =
  \R^d/L$. Then the following are equivalent:
 \begin{enumerate}
 \item All of the $\lambda_i$'s are virtual eigenvalues.
 \item There exist arbitrarily small shape changes that convert
   $\Omega$ to a Cantor bundle over $\ttt$.
 \end{enumerate}
 \end{thm}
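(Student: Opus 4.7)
The direction $(2) \Rightarrow (1)$ is elementary. Suppose some arbitrarily small shape change converts $\Omega$ into a Cantor bundle $\pi : \Omega' \to \ttt = \R^d/L$. The characters of $\ttt$ are indexed by the dual lattice $L^* = L'$. For each $\lambda_i \in L'$, pulling back the corresponding character via $\pi$ yields a continuous eigenfunction of the deformed $\R^d$ action with eigenvalue $\lambda_i$. Hence every $\lambda_i$ becomes a topological eigenvalue simultaneously after the same small shape change, so each is a virtual eigenvalue.

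For $(1) \Rightarrow (2)$, my plan is to merge, into a single simultaneous deformation, the individual small deformations that Theorem \ref{bigtori} guarantees for each $\lambda_i$ separately. By Theorem \ref{bigtori} applied to each $\lambda_i$, each lies in the closure of $RS(H^1(\Omega))$. I therefore choose integer classes $\alpha_1, \ldots, \alpha_d \in H^1(\Omega;\Z)$ with $RS(\alpha_i)$ approximating $\lambda_i$ to within an error that can be made as small as we wish. Shape deformations are parametrized by $H^1(\Omega;\R^d)$, and each such deformation shifts the Ruelle-Sullivan values of a fixed cohomology class by a linear function of the deformation parameter. Since the $\lambda_i$ form a basis of $(\R^d)^*$ and the $RS(\alpha_i)$ already approximate this basis, I expect that an arbitrarily small deformation can be selected so that, in the deformed tiling space $\Omega'$, one has $RS(\alpha_i) = \lambda_i$ exactly for every $i$.

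Once the deformation is in place, each integer class $\alpha_i$ with $RS(\alpha_i) = \lambda_i$ produces a continuous $\R^d$-eigenfunction $f_i : \Omega' \to S^1$ with eigenvalue $\lambda_i$, by the same construction used in Theorem \ref{bigtori}. Because the $\lambda_i$ form a basis of $(\R^d)^*$ dual to a basis of $L$, the product map $(f_1, \ldots, f_d)$ identifies with a continuous equivariant map $\pi : \Omega' \to \ttt = \R^d/L$. Minimality and freeness of the $\R^d$ action force $\pi$ to be surjective; FLC together with the Cantor structure of canonical transversals forces the fibers to be Cantor sets varying continuously; and the suspension picture of \cite{SW} delivers the local triviality. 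Hence $\Omega'$ is a Cantor bundle over $\ttt$, as required.

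The principal obstacle is the simultaneous correction step. For each $\lambda_i$ in isolation Theorem \ref{bigtori} already supplies a small deformation making $\lambda_i$ an honest eigenvalue, but those $d$ deformations are generically incompatible. The core of the argument is the linear-algebra fact that the map from $H^1(\Omega;\R^d)$ into the space of $d$-tuples $(\Delta RS(\alpha_1), \ldots, \Delta RS(\alpha_d)) \in \bigl((\R^d)^*\bigr)^d$ of Ruelle-Sullivan shifts is, even when restricted to arbitrarily small deformations, rich enough to realize the $d$ required corrections $\lambda_i - RS(\alpha_i)$ simultaneously. Given the freedom to refine the choice of the $\alpha_i$ and the linear independence of the $\lambda_i$, this should reduce to a solvable finite-dimensional problem, and it is precisely where the concrete work of the proof must be concentrated.
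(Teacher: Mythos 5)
Your direction $(2)\Rightarrow(1)$ is fine and agrees with the paper. The gap is in $(1)\Rightarrow(2)$, and it is twofold. First, the step you yourself flag as ``where the concrete work must be concentrated'' --- tuning a small deformation so that $RS(\alpha_i)=\lambda_i$ exactly --- is not carried out, and even if it were, it would not deliver what you claim next: an integral class $[\alpha_i]$ with $RS([\alpha_i])=\lambda_i$ does \emph{not} in general produce a continuous eigenfunction with eigenvalue $\lambda_i$. For that you would need $\alpha_i-\lambda_i\,dx_i$ to be the coboundary of a (weakly) pattern-equivariant function, whereas having zero Ruelle--Sullivan image only says its average vanishes; there can be nontrivial classes with zero RS image that are not asymptotically negligible, and for those $\exp\bigl(2\pi i\int\alpha_i\bigr)$ is continuous but is not an eigenfunction for $\lambda_i$. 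So the final ``product map $(f_1,\ldots,f_d)$'' step would fail as stated.

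Second, you are missing the mechanism the paper actually uses, which makes the correction step unnecessary. One packages the integral classes into a single vector-valued class $[\alpha]$ whose RS image is a matrix $M$ with $M_0^{-1}M$ close to the identity ($M_0$ having rows $\lambda_i$), and then replaces $\alpha$ by the cohomologous representative $\tilde\alpha=\rho_r*\alpha$, a convolution with a large bump function. Unique ergodicity (via uniform convergence of ergodic averages) is what guarantees $\tilde\alpha$ is \emph{pointwise} close to its average $M$, hence $M_0^{-1}\tilde\alpha$ is pointwise close to the identity; this is the precise sense in which the shape change is small, and it is a step your proposal never addresses --- a representative of a class with small RS image can fluctuate wildly pointwise. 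The shape change is then implemented by $M_0^{-1}\tilde\alpha$ itself: integrality of $[\alpha]$ forces $\int_x^y\tilde\alpha\in\Z^d$ between corresponding vertices of any two occurrences of a large patch, hence $\int_x^y M_0^{-1}\tilde\alpha\in L$, so the deformed return vectors lie in $L$ on the nose and the deformed space fibers over $\R^d/L$ with no adjustment of RS values at all. The classes $\alpha_i$ simultaneously define the deformation and the bundle projection; that is the idea your outline does not contain.
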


The following is then an immediate corollary:
\begin{thm} \label{bigsquares} Let $\Omega$ be a uniquely ergodic FLC
  tiling space with a minimal, free $\R^d$ action. The image of
  $H^1(\Omega)$ is dense if and only if, for arbitrary length
  $R$, we can apply an arbitrarily small shape change to convert
  $\Omega$ into a Cantor bundle over $\R^d/(R \Z)^d$.
\end{thm}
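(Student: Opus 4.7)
The plan is to deduce Theorem~\ref{bigsquares} directly from Theorems~\ref{bigtori} and~\ref{morebigtori}. The one auxiliary fact I will use is that the set of virtual eigenvalues is a closed subgroup of $(\R^d)^*$: by Theorem~\ref{bigtori} it coincides with the closure of the image of the Ruelle-Sullivan map, which is a subgroup because RS is a group homomorphism, and the closure of a subgroup is again a subgroup.

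For the forward direction, I will assume the image of $H^1(\Omega)$ is dense in $(\R^d)^*$ and fix an arbitrary $R>0$. Setting $L = (R\Z)^d$, the dual lattice is $L^* = ((1/R)\Z)^d$, spanned by the basis $\lambda_i = (1/R)\, e_i^*$. By Theorem~\ref{bigtori} each $\lambda_i$ is a virtual eigenvalue, so Theorem~\ref{morebigtori} supplies arbitrarily small shape changes converting $\Omega$ into a Cantor bundle over $\R^d/L = \R^d/(R\Z)^d$.

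For the reverse direction, I will assume that for each $R > 0$, arbitrarily small shape changes realize $\Omega$ as a Cantor bundle over $\R^d/(R\Z)^d$. Theorem~\ref{morebigtori} then guarantees that the basis $\lambda_i = (1/R)\, e_i^*$ consists of virtual eigenvalues, and by the subgroup observation every point of $((1/R)\Z)^d$ is a virtual eigenvalue as well. Letting $R$ range over $\N$, the union of these lattices is $\Q^d$, a dense subset of $(\R^d)^*$; since the virtual eigenvalues form a closed set, they fill all of $(\R^d)^*$. Applying Theorem~\ref{bigtori} once more yields density of the image of RS.

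Since the paper flags this as an immediate corollary, I do not anticipate any real obstacle. The only points to watch are the lattice/dual-lattice duality between $(R\Z)^d$ and $((1/R)\Z)^d$, and the verification that virtual eigenvalues form a group; both are routine.
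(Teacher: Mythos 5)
Your proof is correct and follows the same route the paper intends: Theorem \ref{bigsquares} is deduced from Theorems \ref{bigtori} and \ref{morebigtori} applied to the dual basis $\lambda_i = (1/R)e_i^*$ of $L=(R\Z)^d$, with the paper leaving these details as an ``immediate corollary.'' Your two supplementary observations --- the duality between $(R\Z)^d$ and $((1/R)\Z)^d$, and the fact that the closure of the RS image is a subgroup (needed in the reverse direction to pass from the basis vectors to the dense set $\Q^d$) --- are exactly the routine points that make the corollary go through.
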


It is worth contrasting Theorem \ref{bigtori} with Theorem 3.9 of
\cite{GPSCocycle}. The latter theorem states that the existence of
small positive cocycles implies that, for arbitrary length $R$,
one can break up each tiling into locally defined regions such that
each region contains a cube of side $R$.  This in turn gives an easy
orbit equivalence between the original $\Z^d$ action on a Cantor set
and a $\Z$ action.

However, the converse is not true.  The counterexample we construct to
the conjectures of Giordano, Kellendonk, Putnam and Skau does not admit small
positive cocycles.  However, it is built as a hierarchical tiling
space, and so the tilings do admit partitions into arbitrarily large
locally defined rectangular and square regions.  By contrast to
Theorem 3.9 of \cite{GPSCocycle}, Theorems \ref{bigtori} and
\ref{morebigtori} and \ref{bigsquares} are ``if and only if''
statements.  Since our counterexample does not have elements of
$H^1(\Omega)$ whose images under Ruelle-Sullivan are small, it does
not admit large torus fibrations, and does not have any virtual
eigenvalues beyond $\Z^2$.

\begin{dfn}
  For a $\Z^2$-subshift $\Xi$, let $\phi^{(n_1,n_2)}$ represent
  translation by $(n_1,n_2) \in \Z^2$. For each integer $N$, let $N_+$
  denote the integers greater than $N$, and let $N_-$ denote the
  integers less than $N$.  We say that $\Xi$ {\bf admits a horizontal
    shear} if there exists an element $u$ of the subshift space
  and integers $N, N'$ such that, for every integer $n$, there is an
  element of $\Xi$ that agrees with $u$ on $\Z \times N_+$ , and
  agrees with $\phi^{(n,0)} u$ on $\Z \times N'_-$.
\end{dfn}
Vertical shears are defined similarly.  Shears have a profound effect
on the topological eigenvalues.

\begin{thm} [Theorem \ref{baby-shears}] \label{no-top-evals}
In a minimal
$\Z^2$ subshift that admits
shears in both coordinate directions, the spectrum of
topological eigenvalues is precisely $\Z^2$.
\end{thm}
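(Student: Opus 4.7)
The plan is to show that every continuous eigenfunction has eigenvalue in $\Z^2$; since $\Z^2$ is realized trivially by the constant character, this will pin the spectrum down exactly. Let $f:\Xi\to\C$ be continuous and nonzero with $f\circ\phi^{(n_1,n_2)} = e^{2\pi i(\alpha n_1+\beta n_2)}\,f$. Minimality forces $|f|$ to be constant, so I normalize $|f|\equiv 1$. I will deduce $\alpha\in\Z$ from the horizontal shear; the vertical shear gives $\beta\in\Z$ by a symmetric argument.

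Let $u,\,N,\,N',\,v_n$ witness the horizontal shear. Given $\e>0$, pick $R$ from the uniform continuity of $f$ so that any two elements of $\Xi$ agreeing on the window $W_R:=\{(i,j):|i|,|j|\le R\}$ have $f$-values within $\e$. Choose any integer $k > R + \max(N,-N')$; crucially, this choice is independent of $n$. Shifting $v_n$ upward, $\phi^{(0,k)}v_n$ agrees with $\phi^{(0,k)}u$ on $\Z\times(N-k)_+\supseteq W_R$. Shifting $v_n$ downward, $\phi^{(0,-k)}v_n$ agrees with $\phi^{(n,-k)}u$ on $\Z\times(N'+k)_-\supseteq W_R$. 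Applying continuity to both comparisons and stripping the common phase factors $e^{\pm 2\pi i \beta k}$ via the eigenvalue relation yields
\begin{align*}
|f(v_n) - f(u)| &< \e, \\
|f(v_n) - e^{2\pi i\alpha n}\,f(u)| &< \e.
\end{align*}

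The triangle inequality then gives $|(1 - e^{2\pi i\alpha n})\,f(u)| < 2\e$, hence $|1 - e^{2\pi i\alpha n}| < 2\e$ since $|f(u)|=1$. Because $\e$ was arbitrary, taking $n=1$ forces $e^{2\pi i\alpha} = 1$, i.e.\ $\alpha\in\Z$. Replacing ``horizontal'' by ``vertical'' and swapping the roles of the two coordinates yields $\beta\in\Z$, and together these show that any topological eigenvalue lies in $\Z^2$. Conversely, every element of $\Z^2$ acts as the trivial character on $\Z^2$ and so is realized by the constant eigenfunction, giving the desired equality.

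The only delicate point I anticipate is the uniformity in $n$: one must confirm that the shear parameters $N,N'$ are prescribed by the shear and do not vary with $n$, and that the continuity modulus of the single function $f$ does not vary with $n$ either. Given that, one fixes $\e$, then fixes a single $k$ large enough to push both half-plane agreements past $W_R$ simultaneously for every $n\in\Z$. The remainder is bookkeeping about which translated half-plane contains the window.
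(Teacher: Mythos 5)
Your proposal is correct and follows essentially the same route as the paper's proof: shift up to compare the sheared configuration with $u$, shift down to compare it with $\phi^{(n,0)}u$, strip the vertical phase $e^{\pm 2\pi i\beta k}$ via the eigenvalue relation, and conclude $|1-e^{2\pi i\alpha n}|<2\e$ by the triangle inequality; the paper merely phrases the comparison in the suspension using the tiling metric rather than agreement on the window $W_R$, and your handling of the uniformity of $N,N'$ in $n$ matches the quantifier order in the definition of a shear.
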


We conjecture that this constraint on topological eigenvalues extends, upon
perturbation, to a constraint on virtual eigenvalues.

\begin{con} \label{shears} Let $\Xi$ be a minimal, aperiodic and uniquely ergodic
  $\Z^2$ subshift, and let $\Omega$ be the suspension of $\Xi$.  Let
  $\lambda = (\lambda_x,\lambda_y)$ be a virtual eigenvalue of
  $\Omega$.
\begin{enumerate}
\item If $\Xi$ admits a horizontal shear, then $\lambda_x \in \Z$.
\item If $\Xi$ admits a vertical shear, then $\lambda_y \in \Z$.
\item If $\Xi$ admits both a horizontal shear and a vertical shear, then $\lambda \in \Z^2$.
\end{enumerate}
\end{con}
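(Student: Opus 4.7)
My plan is to carry out, inside each deformed tiling space, the same shear argument that proves Theorem~\ref{no-top-evals} in the undeformed setting, and then pass to the limit as the deformation shrinks. Assume $\Xi$ admits a horizontal shear, with witnesses $u,v_n\in\Xi$ and cutoffs $N>N'$, and let $\lambda=(\lambda_x,\lambda_y)$ be a virtual eigenvalue. For every $\e>0$, the definition of virtual eigenvalue supplies a shape deformation $\alpha_\e$ of norm less than $\e$ such that $\lambda$ is a genuine topological eigenvalue of the $\R^2$ action on the deformed space $\Om^{\alpha_\e}$, witnessed by a continuous unimodular eigenfunction $\psi_\e$. Shape changes preserve combinatorics, so the deformed tilings $u^\e,v_n^\e\in\Om^{\alpha_\e}$ inherit the shear relations: $v_n^\e$ agrees combinatorially with $u^\e$ on $\Z\times N_+$ and with $(\phi^{(n,0)}u)^\e$ on $\Z\times N'_-$.

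Next I would convert these combinatorial agreements into asymptotic proximity in the tiling metric of $\Om^{\alpha_\e}$. Integrating $\alpha_\e$ along paths that cross the middle strip $\Z\times[N',N]$ shows that the upper half-planes of $v_n^\e$ and $u^\e$ differ only by a single translation vector $\vec{\delta}_n^\e$ of norm at most $\e(N-N')$. Choosing vertical vectors $\vec{s}_k$ with $|\vec{s}_k|\to\infty$, the tilings $T_{\vec{s}_k}v_n^\e$ and $T_{\vec{s}_k+\vec{\delta}_n^\e}u^\e$ agree on balls of unbounded radius, so continuity of $\psi_\e$ combined with the eigenvalue identity forces
\[
\psi_\e(v_n^\e)=e^{2\pi i\lambda\cdot\vec{\delta}_n^\e}\,\psi_\e(u^\e).
\]
The analogous downward argument produces a second vector $\vec{\delta}_n'^{\,\e}$ of norm $O(\e)$ with $\psi_\e(v_n^\e)=e^{2\pi i\lambda\cdot\vec{\delta}_n'^{\,\e}}\,\psi_\e\big((\phi^{(n,0)}u)^\e\big)$.

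The third step identifies the combinatorial shift $\phi^{(n,0)}$ with an honest geometric translation in the deformed flow: there is a vector $\vec{w}_n^\e$ with $(\phi^{(n,0)}u)^\e=T_{\vec{w}_n^\e}u^\e$, and since $\alpha_\e$ is a cocycle of norm less than $\e$, the displacement accumulated along any horizontal edge-path of combinatorial length $n$ satisfies $\vec{w}_n^\e=(n,0)+O(n\e)$. Combining the two identities above yields $\lambda\cdot(\vec{w}_n^\e+\vec{\delta}_n'^{\,\e}-\vec{\delta}_n^\e)\in\Z$ for every $\e>0$. With $n$ fixed, this quantity converges to $n\lambda_x$ as $\e\to 0$; closedness of $\Z$ then forces $n\lambda_x\in\Z$, and setting $n=1$ proves $\lambda_x\in\Z$. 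Part (ii) follows by a mirror argument, and (iii) by combining (i) and (ii).

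The principal obstacle is the third-paragraph identification of the combinatorial shift with a geometric translation satisfying $\vec{w}_n^\e=(n,0)+O(n\e)$: one must formalise the shape-deformation picture so that the displacement along a combinatorial path is precisely the integral of the deforming cocycle, and then extract the linear-in-$n$ bound from $\|\alpha_\e\|<\e$. A secondary subtlety is that the offsets $\vec{\delta}_n^\e$ nominally depend on the shear cutoff $N-N'$; if the shear definition forced $N-N'$ to grow with $n$, the limiting argument would require refinement. These are essentially bookkeeping issues, which is precisely why the conjecture looks plausible, but upgrading Theorem~\ref{no-top-evals} to the virtual-eigenvalue setting rigorously is the main technical task.
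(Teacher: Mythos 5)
The statement you are proving is Conjecture~\ref{shears}: the paper offers no proof of it, and in fact it remains open there. More to the point, the paper explicitly reports attempting exactly your strategy --- rerunning the shear/eigenfunction argument of Theorem~\ref{baby-shears} inside the deformed space --- and failing, ``insofar as small shape changes, integrated over large distances, can result in non-negligible displacements.'' Your proposal founders on precisely that rock, at the second paragraph rather than the third. The bound $|\vec{\delta}_n^\e|\le \e(N-N')$ is not correct. The deformed upper half-planes of $u^\e$ and $v_n^\e$ are exact translates of one another only outside a collar of width equal to the pattern-equivariance radius $r_\e$ of the deforming form $\alpha_\e$, and the translation vector is $\int_0^{p}(\alpha_{v_n}-\alpha_u)$ for a base vertex $p$ at combinatorial height at least $N+r_\e$; the integrand is uncontrolled (only bounded by $2\e$) on the entire strip of height $\sim N-N'+2r_\e$ where the local patterns of $u$ and $v_n$ differ to radius $r_\e$. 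So the honest bound is $|\vec{\delta}_n^\e|=O(\e\, r_\e)$, and $r_\e$ is not fixed: it blows up as $\e\to 0$. Indeed, the way small shape changes producing new eigenvalues are manufactured (see the proof of Theorem~\ref{newbigtori}, where the form is a convolution $\rho_r*\alpha$ with $r$ chosen large precisely to make the form pointwise $\e$-close to a constant) forces the PE radius to grow as the pointwise size shrinks. Hence $\e\, r_\e$ need not tend to zero, $\vec{\delta}_n^\e$ and $\vec{\delta}_n'^{\,\e}$ need not vanish in the limit, and you cannot extract $n\lambda_x\in\Z$ from $\lambda\cdot(\vec{w}_n^\e+\vec{\delta}_n'^{\,\e}-\vec{\delta}_n^\e)\in\Z$. (Your own flagged ``principal obstacle,'' the identification $\vec{w}_n^\e=(n,0)+O(n\e)$, is actually the unproblematic part for fixed $n$; the dependence of $N-N'$ on $n$ is likewise a red herring, since $N,N'$ are fixed by the shear. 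The genuine obstruction is the one above, and it is not bookkeeping.)

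For what it is worth, the paper circumvents this entirely for its specific example: rather than arguing about eigenfunctions of deformed spaces, it proves $RS(H^1(\Omega_F))=\Z^2$ by purely cohomological means (Steps 1--5 of Theorem~\ref{mainthm}, using boundedness of integrals across the fault line, the vanishing of asymptotically negligible classes for the boundary substitution $\sigma_{\mathbf{n}}$, integrality of the class, and relative primality of supertile widths), and then invokes Theorem~\ref{bigtori} to convert the statement about the Ruelle--Sullivan image into one about virtual eigenvalues. If you want to make progress on the conjecture itself, that closure-of-$RS(H^1)$ characterization of virtual eigenvalues is the route the authors found tractable; the direct perturbation of the eigenfunction argument is the route they found blocked.
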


Given Conjecture \ref{shears}, proving Theorem \ref{nogo} would reduce
to exhibiting a $\Z^2$ subshift with shears in both directions. Such
subshifts are already known. A particularly nice example, Natalie Frank's
non-Pisot Direct Product Variation (DPV) tiling, was
developed in \cite{Frank-primer, FR} and further explored in
\cite{FrankSadunGD, FrankSadunTP}.

In fact, we are able to use the shear properties of the Frank
DPV to prove that $RS(H^1(\Omega))=\Z^2$.
This proves Theorem \ref{nogo} directly, without relying on
Conjecture \ref{shears}.  Indeed, the methods of this proof generalize
to a wide class of DPV tilings, providing evidence for Conjecture
\ref{shears}.

In a related vein, we consider bundle structures and the return
dynamics induced by them, and how this depends on the (non)existence of
small cocycles. These results are more technical, and we leave a precise
statement of the theorems to sections \ref{finest} and \ref{return}.

In section \ref{finest} we consider the question of
when tiling spaces (or subshifts) admit optimal ``finest'' torus
fibrations. The results depend on whether we define ``finest'' in
terms of the volume of the torus or the diameter of the fiber. In one
case the answer depends on the structure of $H^1$; in the other case
on the image of $H^1$ under the Ruelle-Sullivan map.  In section
\ref{return} we investigate the implications of the existence of small
cocycles for return equivalence in tiling spaces. Two tiling spaces
are return equivalent if given any two initial transversals (one in
each space) there exist clopen subsets of these transversals so that
the return dynamics of the translation action induced on these clopen
subsets are conjugate. This study is especially well-suited to tiling
spaces for which there exist arbitrarily small cocycles, for then we
can find arbitrarily small clopen subsets of a transversal with
induced $\Z^d$ return actions, and the original space is homeomorphic
to the suspension of these induced actions. In the presence of
arbitrarily small cocycles, this allows us to show that return
equivalence is the same as being homeomorphic for FLC tiling spaces as
indicated in Theorem \ref{returnequiv}.

The organization of this paper is as follows.  In section
\ref{subshifts} we review the definitions of group cohomology and the
Ruelle-Sullivan map in the context of $\Z^d$ actions on Cantor sets,
and review some of the results of \cite{GPSCocycle}. In section
\ref{tilings} we review the connections between minimal $\Z^d$
actions, subshifts, and FLC tiling spaces, and a formulation of tiling
cohomology involving differential forms.  In section \ref{deform} we
show how to implement small shape changes, and prove Theorems
\ref{bigtori} and \ref{morebigtori}, leading to Theorem
\ref{bigsquares}.  Section \ref{finest} concerns the existence  of
``finest'' torus fibrations, and section \ref{return} relates return
equivalence to homeomorphisms. In section \ref{sec-shears}
we investigate the role of shears and discuss Conjecture
\ref{shears}. Finally, in section \ref{DPV} we exhibit
Frank's DPV tiling and show that it has the necessary
cohomological properties as a consequence of its shear
properties. This then completes the proof of Theorem \ref{nogo}.

We thank Natalie Frank, Ray Heitmann,
John Hunton, Henna Koivusalo, Ian Putnam and
Jamie Walton for helpful discussions.  The work of the first author is
partially supported by grant IN-2013-045 from the Leverhulme
Trust. The work of the second author is partially supported by the
National Science Foundation under grant DMS-1101326.

\section{$\Z^d$ actions, cochains, and cohomology}\label{subshifts}

Consider a $\Z^d$ action on a Cantor set $C$. For $n \in \Z^d$, we
denote the action of $n$ on $x$ by $\phi^n(x)$.  A 1-cocycle with
values in $\Z$ is a continuous map $\theta: C \times \Z^d \to \Z$
such that, for all $n,m \in \Z^d$ and all $\chi \in C$,\footnote{We denote
elements of a Cantor set $C$ by Greek letters such as $\chi$,
elements of a subshift $\Xi$ by
Roman letters such as $u$, points in $\Z^d$ by Roman letters such as $n$,
tilings by capital Roman letters such as $T$, and points in $\R^d$ by
Roman letters such as $x$ and $y$.}
\begin{equation} \label{cocycle-condition} \theta(\chi,n+m) = \theta(\chi,n)
+ \theta(\phi^n(\chi),m).
\end{equation}
If $f : C \to \Z$ is a continuous function, then the {\bf
  coboundary} of $f$ is given by
\begin{equation} \label{coboundary} \delta f(\chi,n) =
  f(\phi^n(\chi))-f(x). \end{equation} It's easy to check that all coboundaries
are cocycles.  The first group cohomology of $C$, denoted
$H^1(C)$, is the quotient of the cocycles by the coboundaries.

Given an invariant measure on $C$, we can average a cocycle to get a
function on $\Z^d$:
$$\bar \theta (n) = \int \theta(\chi,n) d\mu(\chi)$$
By the cocycle condition (\ref{cocycle-condition}), this is a linear
function of $n$, hence an element of $Hom(\Z^d, \R) = (\R^d)^*$.  This
averaging procedure is called the Ruelle-Sullivan (RS) map. It is easy
to check that the RS map sends coboundaries to zero, and hence gives a
linear map from cohomology classes to $(\R^d)^*$. (In fact, the RS map
is a ring homomorphism from the full cohomology of the subshift to the
exterior algebra of $\R^d$ \cite{KP}, but here we are only concerned
with the image of $H^1$.)

\begin{ex} If $\theta(\chi,n) = n_i$ (the $i$-th component of $n$) for
  all $\chi$, then $\bar \theta(n)=n_i$, and $\bar \theta$ is the $i$-th
  basis vector in $(\R^d)^*$. This shows that the integer lattice
  within $(\R^d)^*$ is always in the image of the RS map.
 \end{ex}

 In \cite{GPSCocycle}, Giordano, Putnam and Skau studied what they
 call ``small, positive cocycles''. These are cocycles that are
 non-negative for $n$ in a quadrant (say, $n_i \ge 0$ for all $i$),
 and such that $\theta(\chi,n)$ is bounded above and below by
 constants plus small positive multiples of $|n|$ for $n$ in the
 quadrant, with the constants and multiples being independent of
 $\chi$. These cocycles are mapped by RS to small elements in a
 quadrant of $(\R^d)^*$.  The authors discuss consequences of there
 existing arbitrarily small positive cocycles for minimal actions.
 Also, if true, the conjectures would allow one to construct a
Bratteli diagram in which the dynamics are apparent, as one can do for
$\Z$ actions with the Bratteli--Vershik map.
 The main result of the present  paper is to disprove these conjectures.

\section{Subshifts, tilings, and pattern-equivariant
cohomology} \label{tilings}

Let $\aaa$ be a set with $n$ elements, called {\bf letters}. The set
$\aaa$ is called the {\bf alphabet}, and the space $\aaa^{\Z^d}$ is
called the {\bf full shift on $n$ letters}.  If $u \in \aaa^{\Z^d}$
(i.e., $u$ is a map from $\Z^d$ to $\aaa$), then the shift action is
simply
$$ (\phi^n u)(m) = u(m+n).$$
We give $\aaa^{\Z^d}$ the product topology. This is metrizable, and is
often described with a metric in which two functions $u_1, u_2$ are
$\epsilon$-close if they agree exactly on a ball of radius
$1/\epsilon$ around the origin.

A {\bf subshift} $\Xi$ is a subset of $\aaa^{\Z^d}$ that is closed in
the product topology and is invariant under the action of $\phi$. A
subshift is called {\bf aperiodic} if $\phi^n u = u$ implies $n=0$. That is,
if the action of $\phi$ is free. A
subshift is {\bf minimal} if every orbit is dense. A minimal aperiodic
subshift is homeomorphic to a Cantor set since in this case the subshift
has no isolated points, so aperiodic minimal subshifts are special cases of free minimal $\Z^d$ actions on Cantor sets.

Conversely, every expansive $\Z^d$ action on a Cantor set $C$ can be
identified with a subshift. To see this, partition $C$ into finitely many
clopen sets, each of which is smaller than the expansivity
radius. Thus, for any distinct $\chi, \rho \in C$,
there is an $n\in \Z^d$ such that $\phi^n(\chi)$ and $\phi^n(\rho)$ lie in different clopen
sets. Let $\aaa$ be the collection of clopen sets, and for each $\chi \in
C$ define a function $u_\chi: \Z^d \to \aaa$ such that $u_\chi(n)$ is the
clopen set containing $\phi^n(\chi)$. The image of this assignment is a
subshift $\Xi \subset \aaa^{\Z^d}$, and gives an isomorphism between
the given $\Z^d$ action on $C$ and the natural action of $\Z^d$ on
$\Xi$.

The {\bf suspension} of a subshift $\Xi$ is the set $\Xi \times \R^d /
\sim$, where for each $u \in \Xi$, $n \in \Z^d$ and $v \in \R^d$,
$$ (u,n+v) \sim (\phi^n(u), v).$$
There is a natural $\R^d$ action by addition on the second factor.
Such a suspension can be visualized as a tiling space, where the tiles
are unit cubes labeled by the alphabet $\aaa$ and meeting full-face to
full-face. If $u \in \Xi$, then $(u,v)$ is a tiling in which tiles
with labels $u(n)$ occupy $n-v + [0,1]^d$.  (Acting by $v$ on a tiling
means translating the tiling by $-v$, or equivalently translating
the origin, relative to the tiling, by $+v$).

However, there is no reason to restrict attention to tilings by
cubes. We can begin with an arbitrary collection of labeled shapes,
called prototiles, and define tiles to be translates of prototiles.  A
{\bf patch} of a tiling is a finite collection of tiles.  A tiling is
said to have {\bf finite local complexity} (with respect to
translation), or FLC, if for each radius $R$ there are only finitely
many possible patches of diameter less than $R$, up to
translation. This is equivalent to there only being finitely many
prototiles, with only finitely many ways that two tiles can meet.
Suspensions of subshifts are clearly FLC tilings. Conversely, every
FLC tiling space is homeomorphic to the suspension of a subshift
\cite{SW}.  Further, every FLC tiling space is topologically conjugate
to a tiling space in which the tiles are polyhedra that meet full-face to
full-face; we henceforth restrict attention to tilings of this
form. For more details on this construction, and on topological
properties of tiling spaces, see \cite{tilingsbook}.

The metric on a tiling space $\Omega$ is similar to the metric on
subshifts. Two tilings are $\epsilon$-close if they agree on a ball of
radius $1/\epsilon$ around the origin, up to a uniform translation by
up to $\epsilon$. This makes the $\R^d$ action on $\Omega$
continuous. The canonical transversal $\Xi$ of $\Omega$ is the set of
tilings with a vertex at the origin.

\begin{dfn}
  Let $T$ be a tiling. A function $f$ on $\R^d$ is {\bf strongly
    pattern equivariant (with respect to $T$) with radius $r$} if,
  whenever $x,y \in \R^d$ and $T-x$ and $T-y$ agree on a ball of
  radius $r$ around the origin, then $f(x)=f(y)$. In other words, the
  value of $f(x)$ depends only on the pattern of $T$ on a ball of
  radius $r$ around $x$. A function is {\bf strongly
    pattern-equivariant} if it is strongly pattern-equivariant for
  some finite radius $r$. A function is {\bf weakly pattern
    equivariant} if it is the uniform limit of strongly pattern
  equivariant functions. That is, for any $\epsilon>0$ there is an $r$
  such that the value of $f(x)$ is determined, up to $\epsilon$, by
  the pattern of radius $r$ around $x$. When we say a function is
  {\bf pattern equivariant} (abbreviated PE), the pattern equivariance is
  strong unless stated otherwise.
\end{dfn}

A tiling $T$ gives a CW decomposition of $\R^d$ into $d$-cells
(tiles), $(d-1)$-cells (faces), and so on down to 1-cells (edges) and
0-cells (vertices). We can then speak of 0-cochains, 1-cochains, etc.
A $k$-cochain with values in an Abelian group $A$ is an assignment of
an element of $A$ to each oriented $k$-cell in $T$, and hence
by linearity to each $k$-chain in $T$. As usual,
the coboundary of a cochain $\alpha$, applied to a chain $c$, is $\alpha$
applied to the boundary of $c$:
$$ (\delta \alpha)(c) := \alpha(\partial c).$$

As with functions, we say a $k$-cochain is
PE of radius $r$ if its value on a
$k$-cell depends only on the pattern of $T$ on a ball of radius $r$ around the
center-of-mass of the cell. We say a $k$-cochain is (strongly) PE if it is
PE with some finite radius $r$, and we say it is weakly PE if it is the
uniform limit of strongly PE cochains. It is easy to check that the
coboundary of a strongly PE cochain is strongly PE (albeit with a slightly
larger radius), and that the coboundary of a weakly PE cochain is weakly PE.

Consider the cochain complex of (strongly) PE cochains. The cohomology of
this complex is isomorphic to the \v Cech cohomology of the orbit closure of
$T$ with values in $A$ \cite{integer}. In particular,
if the $\R^d$ action on $\Omega$ is minimal, then
the cohomology of this complex is the same for all $T \in \Omega$, and is
isomorphic to the \v Cech cohomology of $\Omega$ with values in $A$.
We can then speak of the PE cohomology of $\Omega$, by which we mean the
cohomology of PE cochains on an arbitrary $T \in \Omega$.

We henceforth restrict our attention to {\em minimal} subshifts and
minimal tiling spaces. That is, {\bf all $\Z^d$ actions on Cantor sets and
  translations actions on tiling spaces are assumed to be minimal
  unless explicitly noted otherwise.}

For $\Z^d$ actions on Cantor sets, there is a natural correspondence
between the first PE cohomology (with values in $\Z$) and the
group-theoretic $H^1$.
If $\alpha$ is a PE cochain, then $\alpha$
extends by continuity to take values on $k$-cells of all tilings in
$\Omega$, since all tilings exhibit the same patterns. Thus we may
speak of the value of a 1-cochain $\alpha$ on an edge of a tiling. If
$\delta \alpha=0$, and if we associate a tiling by unit cubes to each
function in a subshift, then we get a cocycle $\theta(u, n)$ by
applying $\alpha$ to a path from 0 to $n$ in the tiling associated
with $u$. (We call this the {\bf integral} of $\alpha$ along the
path, even though we are merely summing rather than integrating.) Since
$\delta \alpha=0$, this result does not depend on the
path taken.
The cocycle condition (\ref{cocycle-condition}) is
just the statement that the integral from $0$ to $n+m$ equals the integral
from $0$ to $n$ plus the integral from $n$ to $n+m$.

It is often convenient to define tiling cohomology via PE differential
forms.  Indeed, this is the setting in which pattern equivariance was
first defined \cite{Kel, KP}.  A differential form is strongly PE if
all of its coefficients are strongly PE.  It is weakly PE if all of
its coefficients, and the derivatives to all orders of those
coefficients, are uniform limits of strongly PE functions.
As long as the $\R^d$ action on $\Omega$ is minimal, the
cohomology of the de Rham complex of (strongly) PE differential forms
on an arbitrary tiling $T \in \Omega$
is isomorphic to the \v Cech cohomology of $\Omega$ with values in
$\R$ \cite{Kel, KP, integer}.

It sometimes happens that a real-valued strongly PE cochain (or
differential form) $\alpha$ is the coboundary (or exterior derivative)
of a weakly PE cochain (or form). In this case we say that the
(strong) cohomology class of $\alpha$ is {\bf asymptotically
  negligible} \cite{Clark-Sadun}. A theorem of
\cite{Kellendonk-Sadun}, closely related to the classical
Gottschalk-Hedlund theorem, says that the class of a closed PE
1-cochain (or form) $\alpha$ is asymptotically negligible if and only
if the integral of $\alpha$ is bounded.

Recall that $H^1$ of a CW complex is always torsion-free, since the
universal coefficients theorem relates the torsion in $H^1$ to the
(nonexistent) torsion in $H_0$.  Tiling spaces are inverse limits of
CW complexes, so there is never any torsion in $H^1$ of a tiling
space.  This implies that the first integer-valued cohomology of a
(minimal) tiling space can be viewed as a subgroup of the real-valued
cohomology, as represented either by real-valued PE cochains or by PE
differential forms. What characterizes an integral class $[\alpha]$ is
that there exists a radius $r$ such that, for any two occurrences of a
patch $P$ containing a ball of radius $r$, the integral of $\alpha$
from the center of that ball in one occurrence of $P$ to the
corresponding point in the other occurrence is always an integer. (In
the inverse limit construction of a tiling space $\Omega$, such paths
correspond precisely to closed loops in the CW complexes that
approximate $\Omega$.)

The Ruelle-Sullivan map is most easily defined using differential
forms \cite{KP}. If $[\alpha]$ is an integral cohomology class,
represented by the PE form $\alpha$, then we average the value of
$\alpha(0)$ over all tilings in the space, using an invariant
measure. If the space is uniquely ergodic, then this is equivalent to
picking one tiling $T$ and averaging $\alpha(x)$ over larger and
larger balls around the origin.

\begin{dfn}
A closed PE 1-form $\alpha$
is {\bf positive and $\epsilon$-small} if
\begin{itemize}
\item At each point $x$ in each tiling $T$, $|\alpha(x)| < \epsilon$,
\item There is a cone $C \in (\R^d)^*$ such that $\alpha(x)$ applied to any
vector in that cone is everywhere non-negative, and
\item There exists a vector $v$ in that cone such that $\alpha(x)$ applied to
$v$ is everywhere positive.
\end{itemize}
\end{dfn}

We say that a tiling space {\bf has small positive forms} if, for
each $\epsilon>0$, there exist positive $\epsilon$-small forms. This is
the natural analogue, in the setting of differential forms, of the
``small, positive cocycles'' of \cite{GPSCocycle}.

If $\alpha$ is a closed, positive and $\epsilon$-small form, then the
Ruelle-Sullivan map sends $[\alpha]$ to an element of $(\R^d)^*$ of
magnitude less than $\epsilon$. In particular, if $\Omega$ has small positive
forms, then the image of the Ruelle-Sullivan map is not discrete.
Conversely, if $\Omega$ is uniquely
ergodic and the Ruelle-Sullivan image of $[\alpha]$ is nonzero and has
magnitude less than $\epsilon$, then $[\alpha]$ can be represented by
a positive and $\epsilon$-small form \cite{Julien-Sadun}.

\section{Shape changes and virtual eigenvalues}
\label{deform}

The shape of a tile is described by the vectors along all of the edges
around the tile. The assignment of each edge to its corresponding
vector is a (manifestly PE) closed vector-valued cochain, and defines
a class in $H^1(\Omega, \R^d)$. By varying this cochain in a strongly
PE way, we can can obtain a family of tiling spaces, all with the same
combinatorics, but whose tiles have different shapes and sizes. We
call this a ``shape change'' of the original tiling space.
(See \cite{tilingsbook} for more details.)
In \cite{Clark-Sadun}, small shape changes were shown to be parametrized,
up to local equivalence, by $H^1(\Omega, \R^d)$. In
\cite{Kellendonk-Sadun}, all topological conjugacies between tiling
spaces were shown to be a combination of shape changes and local
equivalences (``mutual local derivability'', or MLD). In \cite{Julien-Sadun},
building on \cite{Julien}, all homeomorphisms between uniquely ergodic
tiling spaces were shown to be a combination of shape changes and local equivalences.

Shape changes can also be implemented with differential forms
\cite{Julien-Sadun}. Let $\alpha$
be a closed PE vector-valued 1-form (i.e. an assignment of a square matrix
to each point of a tiling) representing a class in $H^1(\Omega, \R^d)$.
If $\alpha(x)$ is sufficiently close to a
fixed invertible matrix $M$ at each point $x$, we can apply a shape change
in which the displacement from a vertex $x$ to another vertex $y$ becomes
$\int_x^y \alpha$.

Now let $P$ be a patch containing a ball whose radius is greater than
the pattern equivariance radius of $\alpha$. If $x$ and $y$ are
corresponding vertices of different occurrences of $P$ in a tiling, we
say that $y-x$ is a {\bf return vector} of $P$. If all return vectors
are in $\Z^d$, then our tiling space is a fiber bundle over the torus
$\ttt=\R^d/\Z^d$, where the map $\Omega \to \ttt$ just gives the
coordinates of all occurrences of $P$, mod $\Z^d$ (where we have
chosen a particular vertex in $P$ to represent the location of the
patch).  Note that translation in the tiling is equivalent to
translation in the torus, so that this structure is a topological
semi-conjugacy.  Likewise, if all return vectors of $P$ are in a
lattice $L$, we get a bundle over $\ttt_L = \R^d/L$.

If we apply a shape change to $\Omega$, generated by the vector-valued
1-form $\alpha$, then the return vectors $y-x$ are replaced by integrals
$\int_x^y \alpha$. If these are all in $L$, then the shape-changed tiling
space $\Omega'$ is (topologically conjugate to) a bundle over $\ttt_L$.

The following theorem is a slightly stronger restatement of Theorem
\ref{morebigtori}.

\begin{thm}\label{newbigtori} Suppose that $\Omega$ is a uniquely
  ergodic FLC tiling space with free, minimal $\R^d$ action, and that
  $\lambda_1, \ldots, \lambda_d$ are a basis for $(\R^d)^*$. Let $L'$
  be the lattice spanned by the $\lambda_i$'s, dual to a lattice $L
  \subset \R^d$. Then the following are equivalent:
\begin{enumerate}
\item All of the $\lambda_i$'s are in the closure of $RS(H^1(\Omega))$.
\item For any $\epsilon >0$, there is a shape change, implemented by a
  vector-valued 1-form that is pointwise $\epsilon$-close to the
  identity, such that the resulting tiling space $\Omega'$ is
  topologically conjugate to a Cantor bundle over the torus $\R^d/L$.
\item For any $\epsilon > 0$, there is a shape change, implemented by a vector-valued 1-form that is pointwise
$\epsilon$-close to the identity, such that $\lambda_1, \ldots, \lambda_d$ are all topological eigenvalues of $\Omega'$.
\end{enumerate}
\end{thm}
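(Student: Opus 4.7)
The plan is to prove the cyclic chain of implications $(2) \Rightarrow (3) \Rightarrow (1) \Rightarrow (2)$. Two of the three links are essentially formal, and the content of the theorem lies in $(1) \Rightarrow (2)$, which upgrades the single-eigenvalue construction of Theorem \ref{bigtori} to a coherent simultaneous shape change producing a genuine torus fibration.

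For $(2) \Rightarrow (3)$, the bundle projection $\Omega' \to \R^d/L$ is a topological semi-conjugacy to translation on the torus, and for each $j$ the character $x \mapsto e^{2\pi i \lambda_j(x)}$ on the dual torus pulls back to a continuous eigenfunction on $\Omega'$ with eigenvalue $\lambda_j$. For $(3) \Rightarrow (1)$, the existence of arbitrarily small shape changes making each $\lambda_i$ a topological eigenvalue of $\Omega'$ is precisely the definition of each $\lambda_i$ being a virtual eigenvalue of $\Omega$, so the conclusion follows by applying the equivalence of conditions (1) and (2) in Theorem \ref{bigtori} to each $\lambda_i$ in turn.

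For $(1) \Rightarrow (2)$, let $v_1, \ldots, v_d$ be the basis of $L$ dual to $\lambda_1, \ldots, \lambda_d$, so $\lambda_i(v_j) = \delta_{ij}$ and the identity on $\R^d$ decomposes as $\mathrm{id} = \sum_j v_j \otimes \lambda_j$. Given $\epsilon > 0$, I would invoke the scalar construction underlying the proof of Theorem \ref{bigtori} to produce, for each $i$, a closed strongly PE scalar 1-form $\mu_i$ with integer cohomology class whose pointwise distance from the constant form $\lambda_i$ is less than $\epsilon / (d \max_j \|v_j\|)$. I would then assemble the vector-valued 1-form $\alpha := \sum_j v_j \otimes \mu_j$, which is pointwise $\epsilon$-close to the identity, and use it to implement the shape change yielding $\Omega'$. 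Choosing a PE patch $P$ whose enclosed ball exceeds the pattern-equivariance radii of all of the $\mu_j$, any return vector $y-x$ of $P$ in the original tiling maps under the shape change to
\[\int_x^y \alpha \;=\; \sum_j \Bigl(\int_x^y \mu_j\Bigr)\, v_j,\]
which lies in $L = \sum_j \Z v_j$ because each integer class $[\mu_j]$ integrates to $\Z$ along paths between corresponding vertices of two occurrences of $P$. The patch $P$ therefore fibers $\Omega'$ over $\R^d/L$ with Cantor fibers.

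The hard part is the scalar input: passing from a mere Ruelle--Sullivan approximation of $\lambda_i$ to a cohomologous integer representative that is pointwise $\epsilon$-small. The Ruelle--Sullivan map sees only the average of a 1-form, so pointwise control requires unique ergodicity, which forces ergodic averages of PE coefficients to converge uniformly, together with the Kellendonk--Sadun variant of the Gottschalk--Hedlund theorem allowing one to absorb the mean-zero part of a 1-form into the coboundary of a (weakly) PE function and then re-approximate by a strongly PE integer representative within the same Čech class. Once this scalar input is available for each coordinate, the assembly into a single vector-valued $\alpha$ is direct, and the present theorem amounts to the observation that these scalar constructions can be run in parallel along independent coordinate directions without any interference.
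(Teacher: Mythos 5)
Your overall architecture (the cycle $(1)\Rightarrow(2)\Rightarrow(3)\Rightarrow(1)$, the dual-basis assembly $\alpha=\sum_j v_j\otimes\mu_j$, and the return-vector argument placing $\int_x^y\alpha$ in $L$) matches the paper, but two of your steps have genuine problems. First, your proof of $(3)\Rightarrow(1)$ is circular: you invoke the equivalence of conditions (1) and (2) of Theorem \ref{bigtori}, but in this paper Theorem \ref{bigtori} is itself deduced from Theorem \ref{newbigtori} (by applying it to a basis containing $\lambda$), so you cannot use it here. A direct argument is needed and is short: if $\psi$ is a continuous eigenfunction for $\lambda_i$ on $\Omega'$, then $\frac{-i\,d\psi}{2\pi\psi}$ is the constant form $\lambda_i$ on $\Omega'$ and represents an integral class; pulling it back through the $\epsilon$-small shape change gives a strongly PE integral form on $\Omega$ that is pointwise $\epsilon$-close to $\lambda_i$, hence has Ruelle--Sullivan image $\epsilon$-close to $\lambda_i$.

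Second, the mechanism you propose for the ``hard part'' of $(1)\Rightarrow(2)$ would fail. You suggest absorbing the mean-zero part of a representative into the coboundary of a weakly PE function via the Kellendonk--Sadun/Gottschalk--Hedlund theorem. That theorem applies only to forms whose integrals along paths are \emph{bounded} (i.e., to asymptotically negligible classes); a closed PE 1-form with zero Ruelle--Sullivan average generally has unbounded (merely sublinear) integrals, so its class need not be asymptotically negligible and cannot be written as $d$ of a weakly PE function. The correct device, and the one the paper uses, is smoothing: pick integral classes $[\alpha_i]$ with $RS([\alpha_i])$ close to $\lambda_i$, and convolve a PE representative with a bump function $\rho_r$ of total integral $1$ supported on a ball of radius $r$. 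This preserves the cohomology class and strong pattern equivariance, and by unique ergodicity the ergodic averages converge uniformly, so for $r$ large the convolved form $\rho_r*\alpha_i$ is \emph{pointwise} close to the constant $RS([\alpha_i])$, hence to $\lambda_i$. With that substitution your assembly of the vector-valued form and the verification that return vectors land in $L$ go through as you wrote them, and your $(2)\Rightarrow(3)$ step is correct as stated.
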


\begin{proof} We will show that (1) implies (2), that (2) implies (3),
  and that (3) implies (1).

  (1) $\Rightarrow$ (2): Let $M_0$ be a matrix whose rows are the
  $\lambda_i$'s. We can find integral classes $[\alpha_1], \ldots,
  [\alpha_d] \in H^1$ such that $M_0^{-1}M$ is $(\epsilon/2)$-close to
  the identity matrix, where $M$ is the matrix whose $i$-th row is the
  image of $[\alpha_i]$ under the Ruelle-Sullivan map.  For
  convenience, package the $d$ scalar-valued cohomology classes
  $[\alpha_1], \ldots, [\alpha_d]$ into a single vector-valued class
  $[\alpha]$, represented by a vector-valued differential form
  $\alpha$.

  Since $\Omega$ is uniquely ergodic, the pointwise ergodic theorem
  implies that $M$ is the spatial average of $\alpha$ over any tiling
  $T$ in $\Omega$, and that convergence to this average is
  uniform. Thus, if $\rho_r$ is a bump function of total integral 1
  and large support (say, achieving a constant maximum value on a ball
  of radius $r \gg 1$ and vanishing outside a ball of radius $r+1$),
  and if we pick $r$ large enough, the convolution of $\alpha$ with
  $r$ will be nearly pointwise constant, and in particular $M_0^{-1}
  (\rho_r * \alpha) (x)$ will be $\epsilon/2$ close to $M_0^{-1}M$,
  and so $\epsilon$-close to the identity.  Let $\tilde \alpha =
  \rho_r * \alpha$. Since $\rho_r$ has compact support, and since
  $\alpha$ is PE, $\tilde \alpha$ is also
  PE, albeit with a radius $r+1$ greater than the pattern
  equivariance radius of $\alpha$. Furthermore, $[\tilde \alpha] =
  [\alpha]$. (For more details of this construction, with precise
estimates on the convergence, see \cite{Julien-Sadun}.)

  Since $[\alpha]$ is an integral class, the integral of $\tilde
  \alpha$ from one occurrence of a (sufficiently large) patch $P$ to
  another occurrence must give an integer. If we then define a
  function $f(x) = \int_0^x \tilde \alpha$, then all occurrences of
  $P$ will have the same value of $f \pmod{\Z^d}$.  Likewise, if we
  define a function $g(x) = \int_0^x M_0^{-1} \tilde \alpha$, then all
  occurrences of $P$ will have the same value of $g \pmod L$. Thus,
  the shape change implemented by $M_0^{-1} \tilde \alpha$ maps
  $\Omega$ to a Cantor bundle over $\R^d/L$.

  (2) $\Rightarrow$ (3): Since $L$ and $L'$ are dual lattices, all
  elements of $L'$ are topological eigenvalues of any bundle over
  $\R^d/L$.

  (3) $\Rightarrow$ (1): If $\lambda_i$ is a topological eigenvalue of
  $\Omega'$, with corresponding eigenfunction $\psi$, then $\frac{-i\,
    d\psi}{2\pi\psi}$ is a constant 1-form (equaling $\lambda_i$) on
  any tiling $T' \in \Omega$ and represents an integral cohomology
  class.  This form pulls back to a nearly-constant (but strongly PE)
  1-form on a tiling $T \in \Omega$, representing the same integral
  class. The spatial average of this 1-form is then $\epsilon$-close
  to $\lambda_i$. Since we can pick $\epsilon$ to be arbitrarily
  small, $\lambda_i$ must be in the closure of the image of the
  Ruelle-Sullivan map.
\end{proof}

To prove Theorem \ref{bigtori}, we simply apply Theorem
\ref{newbigtori} to a basis of $(\R^d)^*$ consisting of $\lambda$ and
all but one of the standard basis vectors for $(\R^d)^*$.

Theorems \ref{bigtori}, \ref{morebigtori}, \ref{bigsquares} and \ref{newbigtori}
all assume unique ergodicity. However, the unique ergodicity was only needed
to produce collections of closed PE 1-forms with desired properties.
The following is the analogue of Theorem \ref{newbigtori} without the
assumption of unique ergodicity.

\begin{thm}\label{notuniquetori} Suppose that $\Omega$ is an
FLC tiling space with free, minimal $\R^d$ action, that $M_0$ is an
invertible matrix, that $\alpha$ is a closed, PE, vector-valued
differential form such that $\|M_0 \alpha(x) - I\| < 1/4$ everywhere,
and that $\alpha$ represents an integer cohomology class.
Let $L$ be a lattice in $\R^d$ that is the integer span of the columns
of $M_0$.
Then there is a shape change, induced by $M_0\alpha$, to a new tiling space
$\Omega'$ that is a Cantor bundle over a torus $\R^d/L$.
\end{thm}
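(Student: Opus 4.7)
The plan is to adapt the $(1)\Rightarrow(2)$ direction of Theorem~\ref{newbigtori}, simply skipping the convolution step: the present hypothesis already supplies the pointwise near-constancy of the 1-form that unique ergodicity and convolution were used to produce earlier. First I would verify that $M_0\alpha$ defines a valid shape change of $\Omega$. Since $M_0$ is a constant matrix, $M_0\alpha$ is closed and strongly PE with the same radius as $\alpha$. The bound $\|M_0\alpha(x)-I\|<1/4$ makes $M_0\alpha(x)$ invertible at every point (by Neumann series) with uniformly bounded inverse, so the shape deformation is nondegenerate and produces an FLC tiling space $\Omega'$ combinatorially identical to $\Omega$, in which displacements between vertices $x,y$ of a tiling are given by $\int_x^y M_0\alpha = M_0\int_x^y \alpha$.

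Next, fix a patch $P$ containing a ball of radius greater than the pattern-equivariance radius of $\alpha$. Since $[\alpha]$ is an integral cohomology class, for any two occurrences of $P$ in a tiling of $\Omega$, the integral of $\alpha$ along any path between corresponding vertices is an element of $\Z^d$. Consequently, in $\Omega'$ the corresponding displacement is
\[ \int_x^y M_0\alpha \;=\; M_0\int_x^y \alpha \;\in\; M_0 \Z^d = L, \]
so every return vector of $P$ in $\Omega'$ lies in $L$.

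With that in hand, choose a distinguished vertex $v_P$ of $P$ and define $\pi:\Omega'\to\R^d/L$ by sending $T'\in\Omega'$ to the position modulo $L$ of any occurrence of $v_P$ in $T'$. This is well-defined because distinct occurrences of $P$ differ by elements of $L$, and minimality guarantees that every tiling contains an occurrence of $P$. The map $\pi$ is continuous and intertwines translation on $\Omega'$ with translation on $\R^d/L$. The fiber $\pi^{-1}(\bar 0)$ is the clopen transversal of tilings with $v_P$ at the origin, which is a Cantor set by FLC and aperiodicity. For a sufficiently small open ball $B\subset\R^d/L$ (small enough that a single occurrence of $v_P$ lies in $B$), the map $(y,T')\mapsto T'-y$ is a homeomorphism $B\times\pi^{-1}(\bar 0)\to\pi^{-1}(B)$, yielding the desired Cantor bundle structure.

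I do not anticipate a serious obstacle here: the only step of the earlier proof that invoked unique ergodicity was the convolution producing a pointwise near-constant representative, which the present hypothesis makes superfluous. The remaining ingredients—integrality of return vectors of a sufficiently large patch, existence and Cantor structure of the local transversal, and the standard local product structure of a minimal FLC tiling space—all go through unchanged without unique ergodicity. The mildly delicate point is the verification that the pointwise bound $\|M_0\alpha-I\|<1/4$ alone suffices to preserve the combinatorics of the tiling under the shape change; this follows from the parametrization of small shape changes by vector-valued cohomology described after Theorem~\ref{bigsquares}.
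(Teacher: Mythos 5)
Your proposal is correct and follows essentially the same route as the paper: the key step in both is that integrality of $[\alpha]$ forces return vectors of a large patch $P$ to satisfy $\int_x^y M_0\alpha = M_0\int_x^y\alpha \in M_0\Z^d = L$, so the deformed space fibers over $\R^d/L$. The paper phrases the nondegeneracy of the deformation via global bijectivity of $g(x)=\int_0^x M_0\alpha$ (a consequence of the $1/4$ bound) rather than pointwise invertibility of $M_0\alpha(x)$, but this is the same ingredient, and your extra detail on the local product structure only makes explicit what the paper leaves to its earlier discussion of return vectors.
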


\begin{proof}  Let $f(x) = \int_0^x \alpha$ and let $g(x)=\int_0^x
M_0 \alpha$. The condition $\| M_0 \alpha(x) - I\|<1/4$ is sufficient
to guarantee that $g: \R^d \to \R^d$ is bijective. Since $M_0$ is invertible,
this also implies that $f$ is bijective. As in the proof of
Theorem \ref{newbigtori}, different occurences of a patch $P$ of size
greater than the PE radius of $\alpha$ must have values of $f$ that differ
by elements of $\Z^d$, and so must have values of $g$ that differ by
elements of $L$. Doing a shape change that replaces $x$ by $g(x)$ thus
results in a Cantor bundle over $\R^d/L$.
\end{proof}

If the determinant of $\alpha$ is pointwise small, then the determinant of
$M_0$ must be large, and our torus has
large volume. The less fluctuation there is in $\alpha$, the closer we can
get $M_0\alpha$ to be to the identity (everywhere), and the
closer the shapes and sizes of the tiles in $\Omega'$ will be to the shapes
and sizes in $\Omega$.

\section{Finest bundle}\label{finest}

In \cite{SW} the question is raised whether there is a ``finest''
possible bundle structure of a tiling space. There are a number of
ways in which one might consider one bundle finer than another, and
here we shall give, using two natural notions of measuring fineness,
conditions allowing one to determine in many settings when a
particular bundle structure on a tiling space admits a finest bundle
structure.

In the first notion, we consider the bundle $\Om \overset{p'}{\rightarrow}
\Torus^d$ {\bf finer} than the bundle $\Om\overset{p}{\rightarrow}
\Torus^d$ if there exists a commutative diagram
\begin{equation}\label{finer}
\xymatrix{
\Om\ar[d]_{p}\ar[rd]^{p'}& \\
\Torus^d&\Torus^d\ar[l]^{\pi}}
\end{equation}
in which the mapping $\pi$ is a covering map, and we denote this
relation as $p \preceq p'.$ This is a natural notion in the bundle
category since the bundles $\Torus^d \to \Torus^d$ are covering
maps. If the degree of $\pi$ is greater than one, we consider $p'$ to
be {\bf strictly finer} than $p$ and denote this by $p \prec p'.$ We
call an abelian group $G$ {\bf infinitely generated but of rational
  rank $d$} if $G$ is not finitely generated but $G\otimes\Q$ is
isomorphic to $\Q^d.$

\begin{thm} Let $\Om$ be a minimal FLC tiling space of dimension $d.$
  Suppose further that $H^1(\Om,\Z)$ contains no subgroup that is
  infinitely generated but of rational rank $d.$ Then given any
  bundle projection $\Om \overset{p}{\rightarrow} \Torus^d,$ there is
  a bundle projection $\Om\overset{p'}{\rightarrow} \Torus^d$
  satisfying: $p \preceq p'$ and there is no bundle projection $p''$
  with $p'\prec p''.$ In general there is no such maximal element.
\end{thm}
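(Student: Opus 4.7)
The plan is to encode bundle projections by rank-$d$ subgroups of $H^1(\Om,\Z)$ and convert the hypothesis on $H^1(\Om,\Z)$ into an ascending chain condition on such subgroups. To each bundle $q:\Om\to\ttt_L:=\R^d/L$ I would associate the subgroup
\[
G_q := q^*\bigl(H^1(\ttt_L,\Z)\bigr) \subseteq H^1(\Om,\Z),
\]
identifying $H^1(\ttt_L,\Z)$ with the dual lattice $L^*\subseteq(\R^d)^*$. The generators of $L^*$ pull back to classes represented by the $q$-pullbacks of the constant coordinate $1$-forms on $\ttt_L$; integrating these pullbacks against any translation-invariant probability measure on $\Om$ recovers the generators of $L^*$, which are linearly independent in $(\R^d)^*$, so the pullbacks are independent in $H^1(\Om,\R)$. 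Hence $q^*$ is injective on integer cohomology and $G_q$ has rational rank exactly $d$. A refinement $q\preceq q'$ realized by $q=\pi\circ q'$ with $\pi:\ttt_{L'}\to\ttt_L$ a covering of degree $n$ gives $q^*=(q')^*\circ\pi^*$, and since $\pi^*:L^*\hookrightarrow(L')^*$ has image of index $n$, we obtain $G_q\subseteq G_{q'}$ with $[G_{q'}:G_q]=n$, strict precisely when $q\prec q'$.

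With this dictionary in place, I would apply Zorn's lemma to the poset $\ppp_p:=\{q:p\preceq q\}$. Given any chain $(p_\iota)$ in $\ppp_p$, the union $G_\infty:=\bigcup_\iota G_{p_\iota}$ is a subgroup of $H^1(\Om,\Z)$ of rational rank exactly $d$: each $G_{p_\iota}$ has rank $d$ and contains $G_p$, so $G_{p_\iota}\otimes\Q=G_p\otimes\Q$, and therefore $G_\infty\otimes\Q=G_p\otimes\Q$ is $d$-dimensional. If the chain were not eventually constant, any finite generating set of $G_\infty$ would lie in some $G_{p_{\iota_0}}$, forcing $G_\infty=G_{p_{\iota_0}}$ and contradicting the existence of later strict inclusions; so $G_\infty$ would be infinitely generated of rational rank $d$, excluded by hypothesis. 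Every chain therefore stabilizes and has its eventual value as an upper bound in $\ppp_p$, so Zorn yields the desired maximal $p'$.

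For the final assertion I would take the one-dimensional period-doubling substitution tiling space $\Om_{PD}$, whose first integer cohomology is $\Z[1/2]$ (an infinitely generated group of rational rank $1=d$). The substitution's inflation factor of $2$ yields bundle projections $p_k:\Om_{PD}\to\R/2^k\Z$ for each $k\geq 0$ by projecting modulo the $k$-fold inflated tile length; the computation in Step 1 gives $G_{p_k}=2^{-k}\Z$, and the chain $p_0\prec p_1\prec p_2\prec\cdots$ is therefore strictly ascending and exhausts $\Z[1/2]$, so no bundle projection is maximal.

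The main obstacle is getting the Step 1 dictionary exactly right: verifying that $q^*$ is injective with image of rational rank exactly $d$ (using that $q$ is a genuine bundle over the full $d$-torus rather than factoring through a lower-dimensional quotient) and that $[G_{q'}:G_q]$ equals $\deg\pi$, so that strict refinement of bundles corresponds precisely to proper inclusion of subgroups. Once this correspondence is established, the remainder of the argument is routine Zorn's lemma combined with the explicit period-doubling counterexample.
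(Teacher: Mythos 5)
Your proof is correct, and in substance it is the same argument as the paper's, but you implement the key step differently. The paper assumes an infinite strictly ascending chain $p_1\prec p_2\prec\cdots$, forms the inverse limit $\varprojlim\{\Torus^d,\pi_n\}$, observes that the induced map $\Om\to\varprojlim\{\Torus^d,\pi_n\}$ semi-conjugates the translation action to an equicontinuous one, and then cites \cite{BKS} for the injectivity of the induced map on $H^1$; since $H^1(\varprojlim\{\Torus^d,\pi_n\},\Z)$ is infinitely generated of rational rank $d$, the hypothesis is violated. Your version never forms the inverse limit: you prove injectivity of $q^*$ at each finite stage directly via the Ruelle--Sullivan/invariant-measure computation, and replace the inverse-limit cohomology group by the union $\bigcup G_{p_\iota}$, which is literally the same subgroup by continuity of \v Cech cohomology. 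What your route buys is self-containedness (no appeal to the equicontinuous-factor machinery of \cite{BKS}) and a cleaner handling of arbitrary chains via Zorn rather than sequences; what the paper's route buys is that the injectivity statement is quoted once for the whole tower rather than verified stage by stage. One small point you should make explicit: the Ruelle--Sullivan computation requires fixing a single $\R^d$ action that every $q$ in a given chain semi-conjugates to translation on its torus; the paper handles this by declaring the action to be the one suspended from the coarsest projection in the chain (noting it may differ from the original action), and your argument needs the same convention. Two minor slips in your final paragraph: $H^1$ of the period-doubling tiling space is $\Z[1/2]\oplus\Z$ rather than $\Z[1/2]$ (the extra $\Z$ comes from the eigenvalue $-1$), though $\Z[1/2]$ is still an infinitely generated subgroup of rational rank $1$ so the example stands; and exhibiting one strictly ascending chain shows only that the ascending chain condition fails, not literally that \emph{no} projection is maximal --- but the paper's own justification of the final sentence (constant-length substitutions, chair tilings) is at exactly the same level of informality.
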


\begin{proof}
  Assume that we have an infinite sequence $p_n: \Om \to \Torus^d$ of
  bundle projections satisfying $p_n \prec p_{n+1}$ for all $n.$
  Consider then the following commutative diagram.

\begin{equation}\label{eq-collapse}
\xymatrix{
 & \Om\ar[ld]_{p_1}\ar[d]^{p_2}
\ar[rd]^{p_n}& \\
\Torus^d\;\; & \;\;\;\Torus^d\ar[l]^{\pi_1} \cdots &    \ar[l]^{\pi_n}\Torus^d  \cdots
}
\end{equation}
If we consider the tiling space $\Om$ to have the translational $\R^d$
action given by the suspension construction based on the bundle
projection $p_1$ (which sometimes is and sometimes is not conjugate to
the original action, see \cite{Clark-Sadun}), then each $p_n$
semi--conjugates the translational action on $\Om$ with the natural
translation action of $\R^d$ on $\Torus^d$, and the induced map to the
inverse limit $\Om\overset{p}\to \lim_{\leftarrow} \{\Torus^d,
\pi_n\}$ semi--conjugates the translation action on $\Om$ with the
$\R^d$ action on $\lim_{\leftarrow} \{\Torus^d, \pi_n\}$ by the
commutativity of the diagram. Since the action on $\lim_{\leftarrow}
\{\Torus^d, \pi_n\}$ is equicontinuous, $p$ induces an injection
$p^*:H^1(\lim_{\leftarrow} \{\Torus^d,\pi_n\},\Z)\to H^1(\Om,\Z),$
see, e.g., \cite{BKS}. As $H^1(\lim_{\leftarrow} \{\Torus^d,
\pi_n\},\Z)$ is infinitely generated but of rational rank $d$,
$H^1(\Om,\Z)$ contains a subgroup that is infinitely generated but of
rational rank $d.$
\end{proof}

If $H^1(\Om,\Z)$ contains a subgroup that is infinitely generated
but of rational rank $d,$ there is in general no such maximal
element. For example, in substitution tiling spaces arising from
substitutions of constant length, or in the chair tilings of the plane, we
will have exactly such a sequence of increasingly finer projections.

The theorem does show the existence of a finest bundle
projection, for example, for the Penrose tiling space or any Euclidean
cut and project tiling space.

There is a second notion of finer for which the notions we
have developed here have direct implications. We say that the bundle
$\Om \overset{p'}{\rightarrow} \Torus^d$ is \emph{fiber finer} than
the bundle $\Om \overset{p}{\rightarrow} \Torus^d$ if
$p'^{-1}(\mathbf{0}) \subsetneqq p^{-1}(\mathbf{0}).$ The following is
then a corollary to Theorem~\ref{bigsquares}.

\begin{thm}
  Let $\Omega$ be a uniquely ergodic FLC tiling space with a minimal
  $\R^d$ action. If the image of $H^1(\Omega)$ is dense under the RS
  map, then $\Omega$ admits a sequence of bundle projections $p_n$
  with $p_{n+1}$ fiber finer than $p_n$ for all $n.$
\end{thm}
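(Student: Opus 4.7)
The plan is to apply Theorem~\ref{bigsquares} for an unbounded sequence of torus sizes, using a common family of nested combinatorial patches to force the resulting fibers over $0$ to nest strictly.

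First I fix a reference tiling $T_0 \in \Omega$ with a vertex $v_0$ at the origin, and let $P_n$ denote the combinatorial patch of $T_0$ consisting of all tiles that meet the closed ball of radius $n$ about the origin, with marked vertex $v_0$. Then $P_1 \subset P_2 \subset \cdots$ and all share the marked vertex $v_0$, while the inradius of $P_n$ tends to infinity.

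Next, for each $n$ I apply Theorem~\ref{bigsquares} with $R=n$ and an arbitrarily small shape change. As in the proof of Theorem~\ref{newbigtori}, the shape change is implemented by a smoothed closed PE vector-valued $1$-form $\tilde\alpha_n$ with $(1/n)\tilde\alpha_n$ pointwise close to the identity. Any patch whose inradius exceeds the PE radius of $\tilde\alpha_n$ is a valid generator of the resulting Cantor bundle, so by choosing the convolution radius in the smoothing step small enough I can arrange $P_n$ itself to serve as the generator. Writing $p_n : \Omega \to \R^d/(n\Z)^d$ for the resulting bundle projection, the fiber over $0$ becomes
\[
 p_n^{-1}(0) = \{\, T \in \Omega : \text{an occurrence of } P_n \text{ in } T \text{ has its marked vertex at } 0 \,\}.
\]
This description depends only on the combinatorics of $T$, since the shape change fixes the origin and preserves combinatorial types.

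The inclusion $p_{n+1}^{-1}(0) \subseteq p_n^{-1}(0)$ is then immediate from $P_n \subset P_{n+1}$. For strictness I argue by aperiodicity: if equality held for all but finitely many $n$, then for large enough $n$ each tiling with $P_n$ at the origin would be uniquely determined by its combinatorial origin-patch of radius $n$, so the canonical transversal would inject into a countable set, contradicting that $\Omega$ is uncountable. Passing to a subsequence on which the inclusions are strict yields the required fiber-finer sequence.

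The main obstacle is coordinating the shape change with the prescribed patch $P_n$: given $P_n$ and the target torus $\R^d/(n\Z)^d$, one must produce an integer class whose smoothed representative $\tilde\alpha_n$ has PE radius less than the inradius of $P_n$, is pointwise close to $nI$, and yields a small shape change. This is where density of $RS(H^1(\Omega))$ must be leveraged quantitatively---first, to approximate each $n e_i \in (\R^d)^*$ arbitrarily well by Ruelle--Sullivan images of integer classes, and second, to control the convolution radius in the smoothing step so that the PE radius of $\tilde\alpha_n$ stays below $n$ while the smoothed form remains uniformly close to its spatial average.
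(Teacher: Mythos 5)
Your plan diverges from the paper's proof in two linked ways, and the first is a genuine gap. In the construction of Theorem~\ref{newbigtori}, the projection $p_n$ sends a tiling $T$ to the common value, modulo the lattice, of $g_n(x)$ over all positions $x$ of occurrences of the generating patch. Hence $p_n^{-1}(0)$ is the set of tilings \emph{all} of whose $P_n$-occurrences lie in $g_n^{-1}\bigl((n\Z)^d\bigr)$; this properly contains your set $\{T : P_n \text{ occurs at } 0\}$, since translating such a $T$ by any $w$ with $g_n(w)$ in the lattice keeps it in the fiber but in general leaves no occurrence at the origin (the occurrence positions form a proper, much sparser subset of the deformed lattice $g_n^{-1}((n\Z)^d)$). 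So your identification of the fiber is incorrect, and with the correct description $P_n\subset P_{n+1}$ does not yield $p_{n+1}^{-1}(0)\subseteq p_n^{-1}(0)$: the two fibers are cut out by different shape-change maps and, crucially, by the lattices $((n+1)\Z)^d$ and $(n\Z)^d$, which are not nested ($(n+1)\Z\not\subset n\Z$). This is exactly why the paper runs Theorem~\ref{bigsquares} over the tori $\R^d/(n!\,\Z)^d$: the lattices $L_n=(n!\,\Z)^d$ satisfy $L_{n+1}\subset L_n$, so composing the finer projection with the covering $\R^d/L_{n+1}\to\R^d/L_n$ produces a compatible projection over the coarser torus, and $p_{n+1}^{-1}(0)\subsetneqq p_n^{-1}(0)$ follows at once from surjectivity of $p_{n+1}$ together with the covering having degree $(n+1)^d>1$; no aperiodicity or uncountability argument is needed.

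The second divergence is the ``coordination'' you correctly flag as the main obstacle, but it cannot be carried out and, fortunately, is not needed. In the proof of Theorem~\ref{newbigtori} the convolution radius $r$ must be taken \emph{large}, large enough that averages of $\alpha$ over balls of radius $r$ are uniformly close to the Ruelle--Sullivan image; that radius is dictated by the rate of equidistribution in $\Omega$, not chosen by you, so you cannot force the PE radius of $\tilde\alpha_n$ below the inradius of a prescribed ball-patch of radius $n$. The construction simply uses whatever sufficiently large patch it demands. Once you replace $n$ by $n!$ and describe the fibers as full preimages rather than origin-transversals, the argument collapses to the paper's one-line proof.
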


\begin{proof} Consider the sequence of bundle projections $p_n$ 
corresponding to $\R^d/(n! \Z)^d$ as in Theorem~\ref{bigsquares}. 
 \end{proof}

But by the above, the projections $p_n$ will not generally be related
by the relation $\prec$. One could also consider this result in terms of induced actions. A
tiling space with a given bundle structure
$\Om\overset{p}{\rightarrow} \Torus^d$ is also the suspension of a
$\Z^d$ action $\phi$ given by the global holonomy to a fiber $F$ of
the projection as discussed in section~\ref{tilings}. When the
conditions of the theorem are met, for any given clopen set $K$ of $F$
it follows that one can find an induced $\Z^d$ action $\phi_{K'}$ on
some proper clopen $K'\subset K.$ This induced action is the analogue
of the first return map for a flow.

\section{Return equivalence in tiling spaces}\label{return}

In \cite{CHL2013a} the authors developed the notion of return
equivalence for matchbox manifolds, a class of foliated spaces
including FLC tiling spaces which are generally characterized by
having totally disconnected transversals. Two matchbox manifolds
$\cM_1,\cM_2$ are {\bf return equivalent} if given any clopen subsets
$K_i$ of transversals of $\cM_i$, there exist clopen subsets
$V_i\subset K_i$ such that the induced holonomy pseudogroups on the
$V_i$ are isomorphic. Roughly, a pseudogroup on $X$ is a non--empty
collection of homeomorphisms between open subsets of $X$ that is
closed under inverses, compositions, restrictions and unions. An
isomorphism between a pseudogroup $\Psi_X$ on $X$ and a pseudogroup
$\Psi_Y$ on $Y$ is given by a homeomorphism $h:X \to Y$ such that for
each $g\in \Psi_X,$ $h\circ g \circ h^{-1}\in \Psi_Y$ and conversely,
and this correspondence respects composition. In the case of an FLC
tiling space $\Om$, the induced holonomy pseudogroup on a transversal
$\T$ is easier to conceive than is usual since it is generated by all
the return maps of the $\R^d$ translation action on $\Om$ to $\T.$ (We
refer the reader to \cite{CHL2013a} for detailed definitions of these
concepts in the general case.) In the case of the suspension of a
$\Z^d$ action $\phi$ on a Cantor set $C$ as in section~\ref{tilings},
$C$ can be regarded as a transversal, and the induced holonomy
pseudogroup on $C$ is the pseudogroup on $C$ generated by
homeomorphisms generating $\phi.$

In \cite{CHL2013a} it is shown that if $\cM_1,\cM_2$ are homeomorphic
{\bf minimal} matchbox manifolds (meaning that each leaf is dense),
then $\cM_1$ and $\cM_2$ are return equivalent. To see why this should
be true in the context of minimal, aperiodic FLC tiling spaces, first
consider that a homeomorphism $h:\Om_1 \to \Om_2$ maps local
transversals (which we regard as subspaces of the domain of a chart)
of $\Om_1$ to those of $\Om_2$ and conversely, and similar facts hold
for atlases of charts. Thus, it suffices to show that given any two
clopen subsets $U_i$ of local transversals $\T_i$ of a single space
$\Om$ there are clopen subsets $V_i\subset U_i$ with isomorphic
induced holonomy pseudogroups. By aperiodicity and minimality of the
translation action $\Phi: \R^d\times \Om \to \Om,$ there is a
topological disk $D \subset \R^d$ such that for some clopen sets
$V_i\subset U_i$ we have for $B:=\Phi(D\times V_1)$ that $B \,\cap\,
V_i= V_i$ and, for each $v\in V_1$, $\Phi(D\times \{v\})$ intersects $V_2$ in a single point. This yields for each point of $V_1$ a uniquely paired
element of $D$ which $\Phi$ maps to a uniquely determined element of
$V_2.$ This correspondence between the elements of $V_1$ and $V_2$ is
the homeomorphism that yields the isomorphism between the respective
holonomy pseudogroups. Similar considerations show that in the case of
the suspension of a $\Z^d$ action $\phi$ on a Cantor set $C,$ any
sufficiently small transversal has holonomy pseudogroup isomorphic to
the holonomy pseudogroup of a clopen subset of $C.$

It is also shown in \cite{CHL2013a} that there are large classes of
matchbox manifolds for which return equivalence implies homeomorphism.
A key tool in most of the proofs is the basic fact that if
$\cM_1,\cM_2$ are both bundles over the same closed manifold $B$ with
conjugate global holonomy actions on the fiber, then $\cM_1$ and
$\cM_2$ are homeomorphic, see, e.g., \cite[Theorem
2.1.7]{CandelConlon2000}. It is however also determined in
\cite{CHL2013a} that there are return equivalent minimal matchbox
manifolds that both have the structure of the total space of a
principal bundle over a surface of genus 2 which are return equivalent
but not homeomorphic.

The question of whether ``return equivalent'' implies ``homeomorphic'' for
tiling spaces involves both topological and ergodic assumptions:

\begin{thm} \label{returnequiv} Let $\Om_1$ be a uniquely ergodic FLC
  tiling space with a free, minimal $\R^d$ action for which the image of
  $H^1(\Om)$ is dense. Then an aperiodic minimal FLC tiling space $\Om_2$ is
  return equivalent to $\Om_1$ if and only if $\Om_1$ and $\Om_2$ are
  homeomorphic.
\end{thm}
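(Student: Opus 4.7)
The plan is to handle the two directions of the equivalence separately. The forward direction, that a homeomorphism $\Om_1 \cong \Om_2$ implies return equivalence, holds in full generality for minimal matchbox manifolds by \cite{CHL2013a} as recalled above, so no further work is needed there. The substantive content is the converse, which relies essentially on the hypothesis that $RS(H^1(\Om_1))$ is dense in $(\R^d)^*$.

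The central tool will be Theorem \ref{bigsquares}: density of the Ruelle--Sullivan image for $\Om_1$ means that for every $R > 0$ and every $\epsilon > 0$ an $\epsilon$-small shape change converts $\Om_1$ into a Cantor bundle over the torus $\R^d/(R \Z)^d$. Equivalently, $\Om_1$ is homeomorphic (via a small shape change) to the suspension of the free minimal $\Z^d$-action by $(R\Z)^d$ on a Cantor fiber $F_R$, and by choosing $R$ large the fiber $F_R$ can be identified with an arbitrarily small clopen transversal of $\Om_1$.

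Given local transversals $\T_i \subset \Om_i$, I would first pick a bundle realization of $\Om_1$ with $R$ so large that $F_R$ lies inside a clopen subset of $\T_1$, and then apply the return equivalence hypothesis with $K_1 = F_R$ to obtain clopen $V_1 \subset F_R$, $V_2 \subset \T_2$, and an isomorphism $h\colon V_1 \to V_2$ of induced holonomy pseudogroups. Next I would refine the bundle by replacing $R$ with $R' = RN$ for large $N$, ensuring that at least one fiber $F'$ of $\Om_1 \to \R^d/(R' \Z)^d$ lies inside $V_1$; on $F'$ the induced holonomy is the genuine free $\Z^d$-action by $(R'\Z)^d$, so $\Om_1$ is the suspension of this action. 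Transporting the $\Z^d$-action from $F'$ to $h(F') \subset V_2$ via $h$ produces a $\Z^d$-pseudogroup on $h(F')$ whose generators are first-return maps of the $\R^d$-action on $\Om_2$, and which therefore carry locally constant displacement vectors in $\R^d$.

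The last step, and the principal obstacle, is to show that these displacement data exhibit $\Om_2$, after at most a small shape change, as a Cantor bundle over $\R^d/(R' \Z)^d$ with holonomy matching the transferred action. Once that is in hand, two bundles over the same base with conjugate holonomy on the fiber are homeomorphic by \cite[Theorem 2.1.7]{CandelConlon2000}, and we conclude $\Om_1 \cong \Om_2$. Carrying out this step amounts to verifying that the displacement cocycle on $h(F')$ represents an integer cohomology class differing from the constant lattice-valued cocycle by a coboundary; equivalently, one must show that enough of the dense-$RS$-image property of $\Om_1$ is inherited by $\Om_2$ under return equivalence to permit a small shape change converting $\Om_2$ to the desired bundle. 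This cohomological comparison of the cocycle transferred from $F'$ with an integer cocycle on $\Om_2$ will be the technical heart of the argument.
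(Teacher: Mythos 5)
Your setup matches the paper's: the two directions are split the same way, the forward implication (homeomorphic $\Rightarrow$ return equivalent) is delegated to \cite{CHL2013a}, and for the converse you use Theorem \ref{bigsquares} to realize $\Om_1$ as the suspension of an induced $\Z^d$ action on a small clopen set $V_1$ inside the given transversal and then transport that action through the return-equivalence isomorphism $h$. Up to that point you are reconstructing the paper's argument.

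The problem is the final step, which you explicitly leave unproven and, more importantly, formulate as a target that is neither needed nor likely attainable. You ask to show that $\Om_2$ admits a \emph{small} shape change exhibiting it as a bundle over the \emph{same large} torus $\R^d/(R'\Z)^d$, and you propose to do this by proving that the transferred displacement cocycle on $h(F')$ differs from the constant lattice-valued cocycle by a coboundary, i.e., that ``enough of the dense-$RS$-image property of $\Om_1$ is inherited by $\Om_2$.'' Nothing forces either of these: $\Om_2$ is an arbitrary aperiodic minimal FLC tiling space, there is no reason the transferred cocycle should be pointwise close to a constant, and establishing density of $RS(H^1(\Om_2))$ before knowing $\Om_1\cong\Om_2$ is exactly the circularity you cannot afford. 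The paper uses something much weaker: $h$ carries $\phi_1|V_1$ to a free $\Z^d$ action $\phi_2|V_2$ on $V_2=h(V_1)$ whose elements are return maps of the translation action and which generates the full induced holonomy pseudogroup on $V_2$; consequently $\Om_2$ is homeomorphic to the \emph{standard} suspension of $\phi_2|V_2$ over $\R^d/\Z^d$ --- no smallness condition, no prescribed lattice, and no cohomological normalization of the displacement data enter, only the \cite{SW}-type fact that an FLC tiling space is homeomorphic to the suspension of any induced $\Z^d$ return action generating the holonomy pseudogroup on a clopen transversal. Since $\phi_1|V_1$ and $\phi_2|V_2$ are conjugate, the two suspensions are bundles over the same torus with conjugate global holonomy and hence homeomorphic, as in \cite[Theorem 2.1.7]{CandelConlon2000}. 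So the gap is genuine: the ``technical heart'' you defer is aimed at the wrong statement, and the step you actually need --- the suspension identification for $\Om_2$ just described --- is missing from your argument.
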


\begin{proof}
  We regard the $\Om_i$ as suspensions of minimal, aperiodic $\Z^d$
  actions $\phi_i$ on Cantor sets $C_i.$  Assuming that the $\Om_i$ are return equivalent,
  we must show that they are homeomorphic. Let $\T_i$ be given
  transversals of the $\Om_i.$ As discussed above, we may consider
  without loss of generality the $\T_i$ to be clopen subsets of $C_i$
  with holonomy actions that induced by $\phi_i.$ By return
  equivalence, there exist clopen subsets $K_i \subset \T_i \subset
  C_i$ and a homeomorphism $h:K_1 \to K_2$ that conjugates the
  pseudogroups on the $K_i$ induced by $\phi_i.$ By our hypothesis on
  $\Om_1$ and Theorem \ref{bigsquares} we then know that $K_1$ has a
  clopen subset $V_1$ such that $\phi_1$ induces a $\Z^d$ action
  $\phi_1|V_1$ of $V_1$ and $\Om_1$ is the suspension of this
  restricted action. The induced pseudogroup action on $V_1$ is then
  the pseudogroup generated by $\phi_1|V_1$ and $h$ conjugates this
  $\Z^d$ action with a $\Z^d$ action $\phi_2|V_2$ on $V_2:=h(V_1)$ which in
  turn is induced by the holonomy action of $\phi_2$ on $V_2.$ Then
  $\Om_2$ is homeomorphic to the suspension of $\phi_2|V_2$ and so
  $\Om_1$ and $\Om_2$ are homeomorphic as they are both suspensions
  over the torus of conjugate actions on a Cantor set.
\end{proof}

The key ingredient in this proof is the existence of arbitrarily
small, positive forms. We suspect that a similar result holds under
much weaker conditions, but the proof would necessarily involve
different ideas due to the lack of bundles over arbitrarily large tori
without a small form condition.

\section{The role of shears}\label{sec-shears}

In preparation for a discussion of Conjecture \ref{shears}, we show that the
topological eigenvalues of a subshift with shears are restricted.

\begin{thm} \label{baby-shears} Let $\Xi$ be a minimal and uniquely
  ergodic $\Z^2$ subshift, and let $\Omega$ be the suspension of
  $\Xi$.  Let $\lambda = (\lambda_x,\lambda_y)$ be a topological
  eigenvalue of $\Omega$.
\begin{enumerate}
\item If $\Xi$ admits a horizontal shear, then $\lambda_x \in \Z$.
\item If $\Xi$ admits a vertical shear, then $\lambda_y \in \Z$.
\item If $\Xi$ admits both a horizontal shear and a vertical shear, then $\lambda \in \Z^2$.
\end{enumerate}
\end{thm}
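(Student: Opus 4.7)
The plan is to exploit continuity of an eigenfunction together with the shear structure to force $e^{2\pi i n \lambda_x} = 1$ for every integer $n$. Part (3) is immediate from (1) and (2), and (2) is the symmetric analogue of (1), so the real work is part (1).

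A topological eigenvalue $\lambda$ comes with a continuous eigenfunction $\psi : \Omega \to \C$ satisfying $\psi(\phi^v T) = e^{2\pi i \lambda \cdot v}\psi(T)$ for all $v \in \R^2$. Because $|\psi|$ is continuous and invariant under the minimal $\R^2$ action, it is a (nonzero) constant; in particular $\psi$ is nowhere vanishing. Fix the data $u$, $N$, $N'$, and $\{w_n\}_{n\in\Z}$ witnessing the horizontal shear.

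The key move is to translate $w_n$ far up and far down, using the fact that continuous functions on the compact space $\Omega$ are determined up to small error by arbitrarily large finite patches around the origin. Translating up by $(0,k)$ for large $k$ brings the ``$u$-like'' upper region of $w_n$ over the origin, so $\phi^{(0,k)}w_n$ and $\phi^{(0,k)}u$ agree on a ball of any preassigned radius $R$ once $k > R + |N|$. Uniform continuity of $\psi$, together with the fact that the common eigenvalue factor $e^{2\pi i k \lambda_y}$ has modulus $1$, then forces $\psi(w_n)=\psi(u)$. Translating instead by $(0,-k)$ for large $k$ brings the ``$\phi^{(n,0)}u$-like'' lower region over the origin, so $\phi^{(0,-k)}w_n$ and $\phi^{(n,-k)}u$ agree near the origin, and the same argument (again dividing through by a unit modulus factor $e^{-2\pi i k \lambda_y}$) yields $\psi(w_n) = e^{2\pi i n \lambda_x}\psi(u)$.

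Combining the two equalities gives $\psi(u) = e^{2\pi i n \lambda_x}\psi(u)$; since $\psi(u) \ne 0$ this forces $n\lambda_x \in \Z$ for every $n \in \Z$, hence $\lambda_x \in \Z$, which proves (1). Part (2) follows by interchanging the two coordinates, and then (3) is automatic. I do not foresee a serious obstacle; the only care needed is bookkeeping in the translation convention (so that the agreement region of $\phi^{(0,\pm k)}w_n$ with the appropriate translate of $u$ or $\phi^{(n,0)}u$ genuinely covers a large ball about the origin for all sufficiently large $k$), and noting that nothing in the argument depends on $n$ being varied over anything beyond $\Z$, since the shear already supplies a witness $w_n$ for every $n$.
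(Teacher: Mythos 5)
Your proposal is correct and follows essentially the same route as the paper: translate the shear witness far up and far down, use uniform continuity of the eigenfunction to identify its value there with that of the appropriate translate of $u$, and cancel the unit-modulus vertical eigenvalue factors to force $e^{2\pi i n\lambda_x}=1$. The only (immaterial) differences are that the paper works with the single witness $n=1$ and keeps explicit $\epsilon$-estimates rather than passing to exact equalities.
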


\begin{proof} Suppose that $\lambda$ is a topological eigenvalue with
  eigenfunction $\psi$, normalized so that $|\psi|=1$
  everywhere. Since $\psi$ is a continuous function on a compact space
  $\Omega$, it is uniformly continuous. So for each $\epsilon >0$
  there exists a $\delta>0$ such that if two tilings $T, T'$ are
  $\delta$-close, then $|\psi(T) - \psi(T')| < \epsilon$.

  Now suppose that $\Xi$ admits a horizontal shear with function
  $u$ and corresponding tiling $T_1$. Let $T_2$ be a tiling in
  $\Omega$ that agrees with $T_1$ on $\R \times [N, \infty)$ and
  agrees with $T_1-(1,0)$ on $\R \times (-\infty, N']$.  Pick
  $\epsilon>0$ and a constant $K > \delta^{-1} + \max(|N|,|N')$.

  Since $T_1-(0,K)$ and $T_2-(0,K)$ agree on $\R \times [N-K,
  \infty)$, they are $\delta$-close, and so $\psi(T_1-(0,K))$ and
  $\psi(T_2-(0,K))$ are $\epsilon$-close.  By the eigenvalue property,
  $\psi(T_2-(0,-K)) = \exp(-4\pi i \lambda_y K) \psi(T_2-(0,K)).$
  Since $T_2-(0,-K)$ and $T_1-(1,-K)$ agree on $\R \times (\-\infty,
  K+N')$, they are $\delta$-close, so $\psi(T_2-(0,-K))$ and
  $\psi(T_1-(1,-K))$ are $\epsilon$-close.  Finally,
  $\psi(T_1-(1,K))$ equals $\exp(4 \pi i \lambda_y K)
  \psi(T_1-(1,-K))$ by the eigenvalue property. That is,
\begin{eqnarray*}
  \psi( T_1-(1,K)) & = & \exp(4 \pi i \lambda_y K) \psi(T_1-(1,-K)) \cr
  & \approx & \exp(4 \pi i \lambda_y K) \psi(T_2-(0,-K)) \cr
  & = & \psi(T_2-(0,K)) \cr
  & \approx & \psi (T_1-(0,K))
\end{eqnarray*}
However, $\psi(T_1-(1,K)) = \exp (2 \pi i \lambda_x)
\psi(T_1-(0,K))$, so $\exp(2 \pi i \lambda_x)$ must be $2\epsilon$-close to 1.
Since $\epsilon$ was arbitrary, $\exp(2 \pi i \lambda_x)$
must be exactly 1, and $\lambda_x$ must be an integer.

This proves statement (1). Statement (2) is similar, and statement (3)
follows from statements (1) and (2).
  \end{proof}

It is tempting to try to prove Conjecture \ref{shears} by
modifying the proof of Theorem \ref{baby-shears} to take into
account the effects of the small shape change. Unfortunately, we have not
succeeded in this approach, insofar as small shape changes, integrated over
large distances, can result in non-negligible displacements.

Instead, we study specific examples where we can track the effects of the shears
directly.
In \cite{FrankSadunCoho}, the authors study tiling spaces with continuous
shears (and with infinite local complexity), and show that the shears serve to kill off parts of $H^1$.
In the following section, we obtain qualitatively similar results for tilings
with FLC.

\section{The counterexample} \label{DPV}

Natalie Frank's Direct Product Variation (DPV) tiling (see
\cite{Frank-primer, FR} for basic properties and \cite{FrankSadunCoho,
  FrankSadunTP} for further development) comes from a fusion rule
\cite{FrankSadunGD}, which can be considered as a generalization of a
substitution, yielding a hierarchical structure. In the particular
example $\Om_F$ we consider, there are four tiles, $a$, $b$, $c$, and
$d$, which we take to be unit squares in $\R^2.$ All tilings in
$\Om_F$ consist of translates of these four tiles.  For the fusion
rule used to construct $\Om_F,$ supertiles $P_n(a, b, c, \hbox{ or }
d)$ are defined recursively. The 0-supertiles are just the tiles
themselves. For $n \ge 0$, we combine $n$-supertiles into
$n+1$-supertiles $P_{n+1}(a,b,c,d)$ by the following combinatorial
rule:

\begin{equation} \label{DPV-fusion}
 P_{n+1}(a) = \left [ \begin{matrix} P_n(b)&P_n(d)&P_n(d)&P_n(d) \cr P_n(c)&P_n(c)&P_n(a)&P_n(d) \cr P_n(d)&P_n(d)&P_n(b)&P_n(d) \cr P_n(d)&P_n(d)&P_n(b)&P_n(c) \end{matrix} \right ], \quad
P_{n+1}(c) = \left [ \begin{matrix} P_n(b) \cr P_n(b) \cr P_n( b) \cr
    P_n(a) \end{matrix} \right ], $$ $$ P_{n+1}(b) = \left
  [ \begin{matrix} P_n(a)&P_n(c)&P_n(c)&P_n(c) \end{matrix}\right ],
\quad P_{n+1}(d) =[ P_n(a)].
\end{equation}

\hspace{.05in} \includegraphics[width=0.4\textwidth]{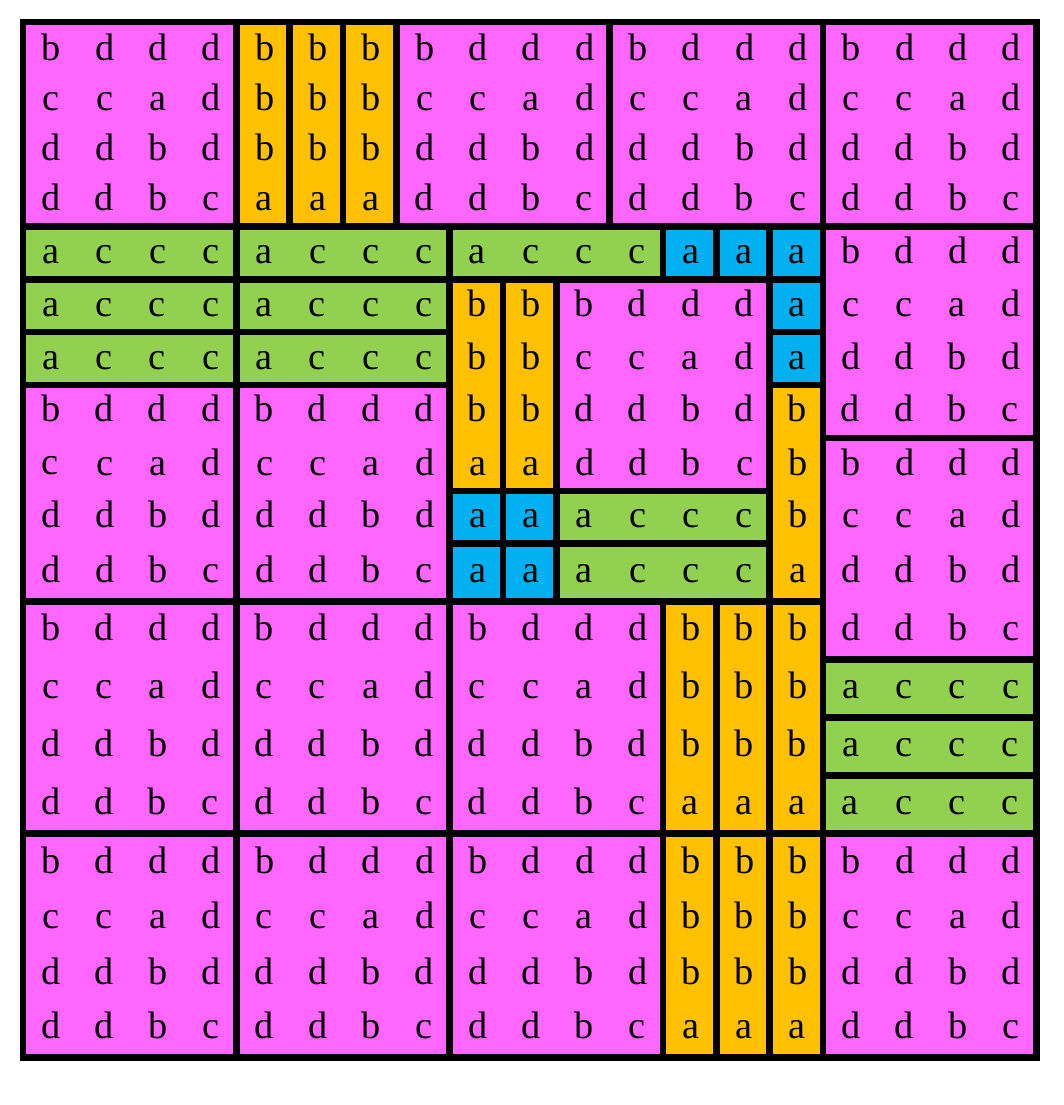} \hspace{.1in} \includegraphics[width=0.421\textwidth]{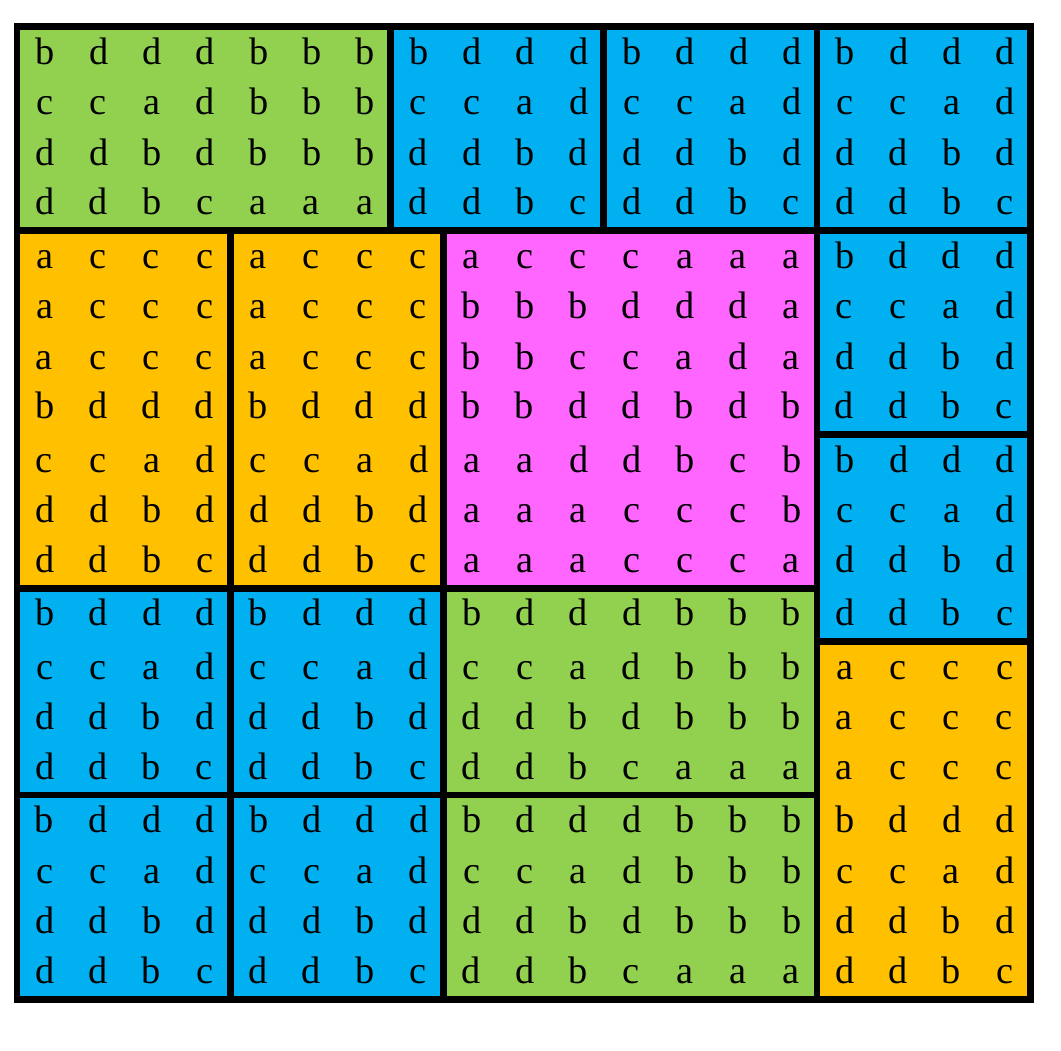}

\begin{center}
{\small The 1 and 2-supertiles within $P_3(a)$}
\end{center}

\medskip Notice that the fusion rule (\ref{DPV-fusion}) admits an
involution in which we swap the vertical and horizontal directions,
while swapping the labels $b$ and $c$ at the same time. Thus, many of
the facts established for the horizontal direction apply equally to
the vertical direction.

By the primitivity of the fusion rule, the tilings in $\Om_F$ consist
of those tilings $T$ for which every patch of $T$ is contained in
$P_n(a)$ for some $n\geq 0$, where we can replace $a$ with any of the
other 3 symbols. Using the standard equivalence between the occurrence
of patches with bounded gaps and the minimality of the translation
action, this allows one to see the minimality of the translation
action directly. Any patch $P$ of a tiling in $\Om_F$ occurs in
$P_n(a)$ for some $n$ by construction. Now every $(n+1)$-supertile
contains at least one copy of $P_n(a)$, and the $(n+1)$-supertiles
appear with bounded gaps because they have bounded
diameters.\footnote{We are using the fact that every tiling $T\in
  \Om_F$ admits a decomposition into $(n+1)$-supertiles. Although the
  uniqueness of such a decomposition is sometimes subtle, the
  existence follows from the axioms of hierarchical tilings
  \cite{FrankSadunGD}.}  Similar arguments as used for showing the
freeness of the translation action for substitution tiling spaces can
be applied to show that the translation action on $\Om_F$ is free, but
this can also be directly inferred directly from the existence of the
shears established in Theorem~\ref{DPVshears}.  That the translation
action on $\Om_F$ is uniquely ergodic follows from the general result
\cite[Cor. 3.10]{FrankSadunGD} for fusion tilings.

To see how the shears come about, consider the way that the supertiles
are situated within supertiles of larger order. For $n>0$, the different
$n$-supertiles have different sizes, but the fusion is set up so that
things always fit together. In fact, $P_n(a)$ and $P_n(d)$ are always
squares, with the $a$-supertile being roughly $(1+\sqrt{13})/2$ times
wider than the $d$ supertile. $P_n(b)$ and $P_n(c)$ are wide and tall
rectangles, respectively, with aspect ratios approaching
$1:(1+\sqrt{13})/2$ as $n \to \infty$.

However, as seen in the above figure,
the $n$-supertiles within an $(n+1)$-supertile do {\em not}
typically meet full-edge to full-edge. They meet with a number of
possible offsets. The $(n-1)$-supertiles on the boundaries of these
$n$-supertiles meet with a greater number of possible offsets.  The
$(n-2)$-supertiles on the boundaries of {\em these} $(n-1)$-supertiles
meet with even more possible offsets.

\begin{thm}\label{DPVshears}
The Frank DPV tiling $\Om_F$ admits shears in both directions.
\end{thm}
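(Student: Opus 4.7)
The plan is to construct horizontal shears directly from the hierarchical structure of $\Om_F$; the vertical shears then follow by the $b \leftrightarrow c$ involution exchanging horizontal and vertical directions, which the paper has noted is a symmetry of the fusion rule (\ref{DPV-fusion}).

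First, pick a reference tiling $u \in \Om_F$ whose supertile decomposition is coherent at the origin, in the sense that for every $n$ the origin lies in the interior of some $n$-supertile, and these supertiles are nested. Such a $u$ exists by minimality of $\Om_F$ together with a standard compactness argument. Second, inspect how $n$-supertiles fit together inside $P_{n+1}(a)$ according to (\ref{DPV-fusion}). The key observation is that the $n$-supertiles appearing in different rows of $P_{n+1}(a)$ have different horizontal sizes, so their vertical edges do not line up across rows. Consequently, at a horizontal row-boundary inside a higher supertile, there is freedom in how the $(n-1)$-level subsupertiles immediately above the boundary are positioned relative to those immediately below it; different compatible choices produce distinct tilings in $\Om_F$ which agree far from this boundary but are horizontally shifted across it.

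Third, let $S_n \subset \Z$ denote the set of horizontal offsets achievable by such reshufflings at a row-boundary inside an $(n+1)$-supertile of $u$. A careful combinatorial analysis of (\ref{DPV-fusion}) across several levels of the hierarchy shows that $S_n$ contains offsets whose magnitudes grow with $n$, and in fact that $\bigcup_n S_n = \Z$. The non-Pisot character of the DPV is essential at this point: it is precisely this feature that prevents the admissible offsets from being confined to a bounded set or to a proper sublattice of $\Z$. Now fix integers $N' < N$ such that a row-boundary of a sufficiently deep supertile of $u$ near the origin lies inside the strip $\Z \times [N',N]$. For each $m \in \Z$, choose $n$ with $m \in S_n$ and apply the corresponding reshuffling to the portion of $u$ below this boundary; the result is a tiling $T_m \in \Om_F$ agreeing with $u$ on $\Z \times N_+$ and with $\phi^{(m,0)} u$ on $\Z \times N'_-$. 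This establishes the horizontal shear.

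The principal obstacle is the combinatorial verification in the third step: that $\bigcup_n S_n = \Z$ and that each reshuffled pattern actually extends to a global tiling in $\Om_F$. Concretely, one must track how lower-level offsets compose under iteration of (\ref{DPV-fusion}) and then confirm that every candidate altered configuration still admits compatible supertile decompositions at all levels of the hierarchy. The non-Pisot property and the specific form of (\ref{DPV-fusion}) are both crucial here; the analogous construction fails for a Pisot hierarchical tiling, where supertile offsets are forced to lie in a bounded set and no nontrivial shears can arise.
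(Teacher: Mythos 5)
Your overall strategy matches the paper's: produce horizontal shears from offsets at a horizontal fault line and get vertical shears from the $b\leftrightarrow c$ involution. But the proposal has two genuine gaps. The first is exactly the step you flag as ``the principal obstacle'': the assertion that $\bigcup_n S_n = \Z$. This is the entire content of the theorem (the existence of \emph{some} nontrivial offsets was already known from \cite{FR}; what is new is that \emph{arbitrary} integer offsets occur), and waving at ``the non-Pisot character'' does not establish it. The paper fills this in with two concrete ingredients you are missing: (i) the discrepancy between the populations of wide and narrow $n$-supertiles governed by the one-dimensional substitutions $\sigma_{\mathbf{n}}$ (bottom rows) and $\sigma_{\mathbf{s}}$ (top rows) on the two sides of the fault line is unbounded \cite{FR}, so the offsets realize arbitrary multiples of the narrower width modulo the wider width; and (ii) the two widths, namely the entries of $(1,1)M^n$ with $M=\left(\begin{smallmatrix}1&1\\ 3&0\end{smallmatrix}\right)$, are relatively prime --- proved by noting $\det M=-3$, so $M$ is invertible mod every prime $p\neq 3$, together with $(1,1)M^n\equiv(1,1)\pmod 3$. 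Without the coprimality step, unbounded discrepancy would only give you a sublattice of offsets, so your remark that the offsets cannot be ``confined to a proper sublattice'' is precisely what needs, and lacks, an argument.

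The second gap is in the geometry of the fault line. Your reference tiling $u$, chosen so that the origin lies in the interior of nested $n$-supertiles for all $n$, typically has the whole plane as a single infinite-order supertile; a row boundary inside a single $(n+1)$-supertile is only finitely long, so ``reshuffling the portion below it'' cannot produce a tiling agreeing with $\phi^{(m,0)}u$ on an entire lower half-plane $\Z\times N'_-$ as the definition of a shear requires. The paper instead builds, via the Extension Theorem \cite[3.8]{GS} applied to pairs of supertiles of increasing order sharing a boundary along the $x$-axis, a tiling whose upper and lower half-planes are two distinct infinite-order supertiles; only then does translating the entire lower half-plane make sense, and legality of the result follows because every local pattern near the $x$-axis is a northern supertile meeting a southern supertile with some offset, all of which are already realized elsewhere by (i) and (ii). You should restructure the construction around such a two-sided infinite-order tiling and supply the discrepancy and coprimality arguments before the proof is complete.
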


\begin{proof}  Most of this proof is already well known, in particular
the fact that it is possible to apply infinitely many shears to a legal
tiling and obtain another legal tiling \cite{FR}. What is new is proving
that shears by {\em arbitrary} integer distances are possible.
We will show this for shears in the horizontal
direction. The vertical direction is similar by the symmetry in the fusion rule.

The horizontal shears come about from a mismatch in the 1-dimensional
dynamics just above and below a horizontal boundary between high-order
supertiles. On the north side of the boundary, we have a 1-dimensional
substitution $\sigma_{\mathbf{n}}$ that is obtained by looking at the
bottom row of each supertile.  That is,
$$ \sigma_{\mathbf{n}}(a)=ddbc, \quad \sigma_{\mathbf{n}}(b)=accc, \quad \sigma_{\mathbf{n}}(c)=a, \quad \sigma_{\mathbf{n}}(d)=a.$$
Likewise, on the south side of the boundary, we have a substitution
$\sigma_{\mathbf{s}}$ derived from the top row of the 2-dimensional
supertiles:
$$\sigma_{\mathbf{s}}(a)= bddd, \quad \sigma_{\mathbf{s}}(b)=accc, \quad \sigma_{\mathbf{s}}(c)=b, \quad \sigma_{\mathbf{s}}(d)=a.$$

On both sides of this ``fault line'', the $n$-supertiles come in two
widths, namely the entries of $(1,1) M^n$, where $M= \left
  ( \begin{smallmatrix} 1&1 \cr 3&0 \end{smallmatrix} \right )$.
These widths are relatively prime. To see this, note that the matrix
$M$ has determinant $-3$, and so is invertible over all $\Z_p$ with
$p$ a prime other than 3. Since the widths of the basic tiles are not
zero (mod $p$), we cannot have both widths of the $n$-supertiles
divisible by $p$. Meanwhile, it is easy to check that $(1,1) M^n =
(1,1)$ (mod 3), and hence that no supertiles have lengths divisible by
3. Thus the widths of the $n$-supertiles share no common factors.

Comparing the 1-dimensional substitution dynamics above and below the
line, we find regions where there are more wide supertiles above and
narrow supertiles below, or vice-versa. This difference in population
above and below the line (sometimes called the {\em discrepancy}) is known to
be unbounded \cite{FR}. This means that the wider $n$-supertiles
above and below the fault line are offset by arbitrary multiples of
the narrower width (mod the wider width). Since the two widths are
relatively prime, arbitrary offsets between wide $n$-supertiles are
possible. Since the substitution $\sigma_{\mathbf{n}}$ is primitive,
this implies that arbitrary offsets between {\em any} supertile above,
and {\em any} supertile below, are possible.

Now consider a tiling $T$ where the upper half plane consists of an
infinite-order supertile (meaning the union of supertiles of higher
and higher order), and the lower half plane consists of another
infinite-order supertile. That it is possible to form such a tiling
follows from an application of the Extension Theorem \cite[3.8]{GS}
where we use patches formed by two supertiles of increasing order
which share a boundary along the $x$--axis.  Translating the lower
half plane in $T$ sideways by an arbitrary integer yields another
legal tiling $T'\in\Om_F$, since all the patterns of $T'$ near the
$x$-axis consist of northern supertiles meeting southern supertiles
with arbitrary offsets, and these in turn are already found elsewhere
in the tiling, from which it follows that all patterns in the
resulting tiling are legal.
\end{proof}

Since the tiling just above the fault line is governed by
$\sigma_{\mathbf{n}}$, it is useful to establish the topology of the
1-dimensional tiling space $\Omega_{\sigma_{\mathbf{n}}}$ generated by
the substitution $\sigma_{\mathbf{n}}$.

\begin{lem} $H^1(\Omega_{\sigma_{\mathbf{n}}}; \R) = \R^4$, and the group $H^1_{an}(\Omega_{\sigma_{\mathbf{n}}}; \R)$ of asymptotically
negligible classes is trivial.
\end{lem}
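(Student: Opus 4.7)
The plan is to compute $\check H^1(\Omega_{\sigma_{\mathbf{n}}};\R)$ by the Anderson--Putnam method and then apply the bounded-integral criterion for asymptotic negligibility cited at the end of Section~\ref{tilings}. First I would enumerate the legal two-letter words of $\sigma_{\mathbf{n}}$ by iterating the substitution; one finds the nine pairs $aa, ac, ad, bc, ca, cc, cd, db, dd$. Then I would build an Anderson--Putnam complex $K$ with enough collaring that $\Omega_{\sigma_{\mathbf{n}}} \cong \varprojlim(K, f)$, so that $\check H^1(\Omega_{\sigma_{\mathbf{n}}};\R) = \varinjlim(H^1(K;\R), f^*)$. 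The substitution matrix on uncollared tiles,
\[M = \begin{pmatrix} 0 & 1 & 1 & 2 \\ 1 & 0 & 3 & 0 \\ 1 & 0 & 0 & 0 \\ 1 & 0 & 0 & 0 \end{pmatrix},\]
has characteristic polynomial $t(t+1)(t^2-t-3)$, giving eigenvalues $0, -1$, and $(1\pm\sqrt{13})/2$. The three nonzero eigenvalues produce three generators of the direct limit, and a fourth independent generator appears after collaring, reflecting the asymptotic degeneracy $\sigma_{\mathbf{n}}(c)=\sigma_{\mathbf{n}}(d)=a$. This yields $\check H^1(\Omega_{\sigma_{\mathbf{n}}};\R)=\R^4$.

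For the second assertion I would invoke the cited theorem that $[\alpha]$ is asymptotically negligible iff its representing PE $1$-form has bounded integral along $\R$. The integrals on $\Omega_{\sigma_{\mathbf{n}}}$ are governed by the hierarchical substitution structure: if $[\alpha]$ lies in the $\mu$-eigenspace of the substitution action on cohomology, then its integral over an $n$-supertile of type $x$ scales as $\mu^n$. For the Perron value $\lambda_+=(1+\sqrt{13})/2$, this yields linear growth; for $\lambda_-=(1-\sqrt{13})/2$ with $|\lambda_-|>1$, power-law growth of order $|x|^{\log_{\lambda_+}|\lambda_-|}$; and for $\mu=-1$, the hierarchical decomposition of a long interval yields generic growth of order $\log|x|$. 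The fourth, asymptotic, class can be analyzed in the same manner and is also unbounded. Hence no nontrivial class is asymptotically negligible, and $H^1_{an}(\Omega_{\sigma_{\mathbf{n}}};\R)=0$.

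The main obstacle is justifying the rank-$4$ count. Since $M$ is singular (rows $3$ and $4$ coincide, reflecting $\sigma_{\mathbf{n}}(c)=\sigma_{\mathbf{n}}(d)=a$), the direct limit $\varinjlim(\R^4,M)$ has rank only~$3$; producing the extra rank-$1$ generator requires working with a sufficiently collared AP complex (or, equivalently, using the Barge--Diamond exact sequence for one-dimensional substitutions). Verifying that this additional class is likewise not asymptotically negligible, despite not corresponding to a nontrivial stable eigenvalue of the uncollared substitution action, is the most delicate step of the argument.
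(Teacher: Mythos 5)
Your computation of $H^1(\Omega_{\sigma_{\mathbf{n}}};\R)=\R^4$ is essentially the paper's: the paper runs the Barge--Diamond exact sequence (the route you name as the alternative), getting three dimensions from $\varinjlim$ of the rank-$3$ substitution matrix with eigenvalues $(1\pm\sqrt{13})/2$, $-1$, $0$, plus one dimension from $H^1$ of the eventual-adjacency graph $X$ (a circle, since for $n\ge 3$ the only supertile adjacencies are $a.a$, $a.d$, $c.a$, $c.d$); this extra circle class is exactly your ``fourth generator from collaring.'' That half is fine.

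The gap is in the second half. You propose to verify non-negligibility directly via the bounded-integral criterion of \cite{Kellendonk-Sadun}, but for the eigenvalue $-1$ your argument does not close: a class in the $(-1)$-eigenspace has integrals over $n$-supertiles that are \emph{bounded} (they just alternate in sign), and decomposing an arbitrary interval into $O(\log|x|)$ supertiles gives only an $O(\log|x|)$ \emph{upper} bound. ``Generic growth of order $\log|x|$'' is not a proof of unboundedness --- a signed sum of $\log|x|$ bounded terms can perfectly well stay bounded --- so you have shown nothing for this class. You flag the fourth (collaring) class yourself as unresolved, and the same objection applies there: the substitution acts on it with eigenvalue $1$, again on the unit circle, so no naive growth estimate decides the question. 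The paper avoids both problems by quoting the theorem of \cite{Clark-Sadun} that $H^1_{an}$ is precisely the contracting subspace of $H^1$ under the substitution-induced map; since the induced eigenvalues are $(1\pm\sqrt{13})/2$, $-1$ and $1$ (the $1$ coming from the action on $H^1(X;\R)$), none of modulus less than $1$, the asymptotically negligible subspace is trivial. You should either invoke that characterization or supply a genuine lower bound on the integrals for the two unit-modulus eigenclasses; as written, those two cases are unproved.
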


\begin{proof} We compute $H^1(\Omega_{\sigma_{\mathbf{n}}}, \R)$ using
  the methods of Barge and Diamond \cite{BD}.  Let $A = \left
    ( \begin{smallmatrix} 0&1&1&1 \cr 1&0&0&0 \cr 1&0&0&0 \cr 2&3&0&
      0 \end{smallmatrix} \right )$ be the substitution matrix of
  $\sigma_{\mathbf{n}}$.  Let $X$ be the graph that describes all
  possible adjacencies between final letters of $n$-supertiles and
  beginning letters of the subsequent $n$-supertile. For $n \ge 3$,
  there are four possible adjacencies: $a.a$, $a.d$, $c.a$ and $c.d$,
  so $X$ is isomorphic to a circle. According to \cite{BD}, the
  cohomology of $\Omega_{\sigma_{\mathbf{n}}}$ fits into the exact
  sequence
$$ 0 \to \tilde H^0(X; \R) \to \lim (\R^2, A^T) \to H^1(\Omega_{\sigma_{\mathbf{n}}}; \R) \to H^1(X; \R) \to 0.$$
Since $\tilde H^0(X; \R)=0$ and $H^1(X; \R)=\R$, and since $A$ has
rank 3, with eigenvalues $(1 \pm \sqrt{13})/2$, $-1$ and $0$,
$H^1(\Omega_{\sigma_{\mathbf{n}}}; \R) = \R^4$.  The substitution
homeomorphism maps $\Omega_{\sigma_{\mathbf{n}}}$ to itself, and so
induces a linear transformation on $H^1(\Omega_{\sigma_{\mathbf{n}}};
\R)$.  The eigenvalues of this linear transformation are $(1 \pm
\sqrt{13})/2$, $-1$ and $1$, with the 1 coming from the action of
substitution on $H^1(X; \R)$.

By a theorem of \cite{Clark-Sadun}, $H^1_{an}$ is the contracting
subspace of $H^1$ under substitution. Since all eigenvalues are of
magnitude 1 or greater, $H^1_{an}$ is trivial.
\end{proof}

\begin{thm}\label{mainthm}
  For the Frank DPV tiling, the image of $H^1(\Omega_F)$ under the
  Ruelle-Sullivan map is $\Z^2$. \end{thm}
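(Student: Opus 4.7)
The plan is to pin down the horizontal component $\lambda_x$ of $RS([\alpha])$ in $\Z$ for every integer-class closed PE 1-form $\alpha$ on $\Om_F$ by exploiting the horizontal shears of Theorem~\ref{DPVshears}; the $b \leftrightarrow c$ symmetry of the fusion rule~(\ref{DPV-fusion}) then handles the vertical component $\lambda_y$ via the analogous vertical shears. Since constant cochains already give $\Z^2 \subseteq RS(H^1(\Om_F))$, only the reverse inclusion requires work.

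I would fix such an $\alpha$ with $RS([\alpha]) = (\lambda_x, \lambda_y)$. For each integer $n$, Theorem~\ref{DPVshears} produces a tiling $T'_n \in \Om_F$ that coincides with a fixed reference $T_1$ above a horizontal fault zone and with the translate $T_1 - (n,0)$ below. Choosing vertices $v_1$ above and $v_2$ below that lie in matching copies of a common large patch $P$ in $T'_n$, the integer-class hypothesis forces $\int_{v_1}^{v_2} \alpha|_{T'_n} \in \Z$. Decomposing this integral into upper, crossing, and lower segments — and using that $T'_n$ above equals $T_1$ while below it equals $T_1 - (n,0)$, so that $\alpha$-integrals on the lower segment translate into shifted integrals in $T_1$ — rewrites this integer as a sum of two integrals in $T_1$ plus a bounded fault-zone correction. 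Unique ergodicity of $\Om_F$ and the pointwise ergodic theorem then isolate $n\lambda_x$ as the leading growth in $n$, with the fault-zone correction remaining $O(1)$.

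To upgrade this to $\lambda_x \in \Z$, I would combine two structural facts. The coprimality of the supertile widths $(1,1)M^n$ established in the proof of Theorem~\ref{DPVshears} provides an unboundedly growing family of horizontal scales at which the integer constraint must hold simultaneously, excluding rational non-integer values of $\lambda_x$. The preceding lemma's vanishing of $H^1_{an}(\Omega_{\sigma_{\mathbf{n}}}; \R)$ handles the irrational possibility: restricting the horizontal component of $\alpha$ to a line near the fault gives a closed PE 1-form for the 1-dimensional substitution $\sigma_{\mathbf{n}}$, and the absence of nontrivial asymptotically negligible classes prevents the ergodic error from staying uniformly bounded in a way that could disguise a non-integer $\lambda_x$. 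The analogous argument, applied to the vertical shears and the corresponding 1-dimensional substitution for vertical slices, yields $\lambda_y \in \Z$.

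The main obstacle will be this last rigidity step — tightening control on the bounded fault-zone correction and the sub-linear ergodic deviation enough to conclude $\lambda_x \in \Z$ rather than merely $\lambda_x \in \Q$. The vanishing of $H^1_{an}$ for the 1D substitution, together with the arithmetic coprimality of the supertile widths, is precisely what closes this gap: it prevents any conspiracy between the fault-zone correction and a fractional $\lambda_x$ from producing integer-valued shear integrals for every $n$.
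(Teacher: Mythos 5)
You have assembled the right ingredients (the shear family, the one-dimensional substitution $\sigma_{\mathbf{n}}$, the vanishing of $H^1_{an}$, the coprimality of the supertile widths, and the involution for the vertical direction), but the step that connects them is missing, and as written the argument does not close. Your integer constraints $\int_{v_1}^{v_2}\alpha|_{T'_n}\in\Z$ give, after the decomposition into upper, crossing, and lower segments, a relation of the shape $n\lambda_x + E(n) \in \Z + O(1)$, where $E(n)$ is the ergodic deviation of a horizontal integral of length $n$. Unique ergodicity only makes $E(n)=o(n)$, and an $o(n)$ error modulo $1$ is no constraint at all; the $O(1)$ fault-zone correction does not help either, since it need not be small compared to the integrality gap. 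Your proposed repairs do not fill this hole: coprimality of the widths is only useful once you have \emph{exact} integrality of $\mu W_a$ and $\mu W_c$ (approximate integrality at coprime scales proves nothing), and the vanishing of $H^1_{an}(\Omega_{\sigma_{\mathbf{n}}})$ is used in the paper in the opposite logical direction from the one you describe: it upgrades ``the integral of $\alpha-\mu\,dx$ along the line is \emph{already known to be bounded}'' to ``that restriction is exact''; it cannot by itself rule out unbounded or awkwardly bounded errors. There is also no rational/irrational case split needed for this tiling.

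The missing idea is the paper's Step 1, which is where the shears do their quantitative work. Fix tilings with a fault on the $x$-axis and consider the lines $\ell_{\pm}=\{y=\pm N\}$ with $N$ larger than the PE radius of $\alpha$. Closedness of $\alpha$ (Stokes) shows that the integrals over $[0,L]\times\{N\}$ and $[0,L]\times\{-N\}$ differ by at most a constant $K$ independent of $L$ and of the tiling. Because the shears allow \emph{any} length-$L$ window on $\ell_+$ to sit above \emph{any} length-$L$ window on $\ell_-$, all length-$L$ horizontal integrals on $\ell_+$ agree to within $2K$, hence each is within $2K$ of $\mu L$. This \emph{uniform} bound on the fluctuation is what makes the induced cochain $\tilde\alpha_0$ on $\Omega_{\sigma_{\mathbf{n}}}$ asymptotically negligible, hence (by the lemma) exact, hence $\int\alpha=\mu(x_2-x_1)$ exactly between recurrences of a sufficiently large patch; only then does integrality give $\mu W_a,\mu W_c\in\Z$, and coprimality give $\mu\in\Z$. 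With that step inserted, your outline becomes essentially the paper's proof; without it, the chain breaks at the first quantitative estimate.
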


\begin{proof} Suppose that $\alpha$ is a PE differential form
  representing an integral class in $H^1(\Omega)$, and that the
  Ruelle-Sullivan map sends $[\alpha]$ to $(\mu, \nu) \in \R^2$.  The
  proof can be broken down into 5 steps.
\begin{enumerate}
\item Using the fact that $\Omega$ admits horizontal shears, we bound
  the fluctuations in $\int \alpha$ along different intervals of a
  fixed horizontal line in a fixed class of tilings.
\item We use properties of $\Omega_{\sigma_{\mathbf{n}}}$ to control
  the values of $\alpha$ along this horizontal line. In particular, we
  show that $\alpha = \mu\, dx + d \beta$ along this line, where $\beta$
  is a strongly PE function.
\item Using the fact that $[\alpha]$ is an integral class, we show
  that for all sufficiently large values of $n$, $\mu$ times the
  length of any $n$-supertile must be an integer.
\item Since the lengths of the $n$-supertiles are relatively prime,
  $\mu$ must itself be an integer.
\item The same arguments, with the roles of $x$ and $y$ reversed, show
  that $\nu$ is an integer.
\end{enumerate}

{\bf Step 1:} Let $r$ be the PE radius of $\alpha$.  Pick an integer
$N>r$ and another integer $n$ such that the height of the smallest
$n$-supertile is greater than $N+r$.  Consider tilings where there is
a boundary between infinite-order supertiles on the $x$ axis and where
the vertices are located on $\Z^2$.  Let $\ell_+$ and $\ell_-$ be the
horizontal lines $y=N$ and $y=-N$.  For any tiling in our class, we
consider the integrals $\int_{(0,N)}^{(L,N)} \alpha$ along $\ell_+$
and $\int_{(0,-N)}^{(L,-N)} \alpha$ along $\ell_-$, where $L$ is an
arbitrary positive integer. Since $\alpha$ is closed, the difference
between these integrals is the difference between the integrals along
vertical paths from $(0,N)$ to $(0,-N)$ and from $(L,N)$ to $(L,-N)$.
However, these vertical integrals are of bounded length, independent
of $L$ and independent of the tiling in question. Since $\alpha$ is
PE, and hence bounded, there is a constant $K$ such that
$\int_{(0,N)}^{(L,N)} \alpha$ is always within $K$ of
$\int_{(0,-N)}^{(L,-N)} \alpha$, regardless of the tiling in question
or the length $L$.

Since $N>r$, the integrals along $\ell_+$ and $\ell_-$ depend only on
the tiling structure in the upper and lower half-plane, respectively.
Furthermore, since the tiling space admits shears, a given horizontal
path along $\ell_+$ can be paired with any horizontal path of the same
length along $\ell_-$.  Thus the integrals along all such paths of
length $L$ along $\ell_-$ must take values within $2K$ of one another,
and likewise the integrals along all paths of length $L$ in
$\ell_+$. In particular, the integral of $\alpha$ along any path of
length $L$ in $\ell_+$ must be within $2K$ of $L\, \cdot$ (average
value of $\alpha$ on $\ell_+$).

{\bf Step 2:} Since $\alpha$ is closed, all horizontal lines in a
given tiling $T$ give the same average value $\lim_{L \to \infty}
\frac{1}{L} \int_p^{p+(L,0)} \alpha$, and the unique ergodicity of the
translation action implies that (the horizontal component of)
$RS([\alpha])$ can be computed by averaging (the horizontal component
of) $\alpha(x)$ over an arbitrary tiling $T$, and is equal to this
common linear average value.  Thus the average value of $\alpha$ on
$\ell_+$ is exactly $\mu$.  We can then write $\alpha$, restricted to
$\ell_+$, as $\mu dx + \alpha_0$, where $\alpha_0$ has average zero.
The results of the previous paragraph imply that the integral of
$\alpha_0$ along any path in $\ell_+$ is bounded by $2K$.

Note that $\ell_+$ is a row ($N$ from the bottom) of a
sequence of $n$-supertiles at the bottom of an infinite-order
supertile.
We associate to $T$ a tiling $T_0 \in \Omega_{\sigma_{\mathbf{n}}}$
given by the
sequence of $n$-supertiles lying just above the $x$-axis.  $T_0$ is a
1-dimensional tiling whose tiles have the labels $a$, $b$, $c$, and
$d$ and the widths of the corresponding $n$-supertiles in
$\Omega_F$.  On $T_0$ we define a 1-cochain $\tilde \alpha_0$ whose value
on a tile (say, running from $x_1$ to $x_2$)
is the integral of $\alpha_0$ across the corresponding
stretch of $\ell_+$ (i.e., from $(x_1,N)$ to $(x_2,N)$).
Since $\ell_+$ lies a distance $r$ or greater from the top or bottom
of these supertiles, this integral depends only on which of the four
supertile types we are working and on the identities of the
supertile's predecessors
and successors to distance $r$. In particular, $\tilde \alpha_0$ is
strongly PE.

Since the integral of $\alpha_0$ along $\ell_+$ is bounded, the integral
of $\tilde \alpha_0$ is bounded, so $\tilde \alpha_0$ must represent an
asymptotically negligible class in $H^1(\Omega_{\sigma_{\mathbf{n}}})$.
However, $H^1_{an}(\Omega_{\sigma_{\mathbf{n}}})$ is trivial.
Thus $\tilde \alpha_0$ represents the zero class, and is the derivative of
a PE function $\tilde \beta$ on $T_0$.
That is, if $P_0$ is a patch of sufficient
length in
$\Omega_{\sigma_{\mathbf{n}}}$ (specifically, greater than twice the PE radius of
$\tilde \beta$), and if this patch occurs at two
different places $x_1$, $x_2$, then $\int_{(x_1,N)}^{(x_2,N)} \alpha_0
= \tilde \beta(x_2)- \tilde \beta(x_1)=0$, and $\int_{(x_1,N)}^{(x_2,N)} \alpha =
\mu(x_2-x_1)$.

{\bf Step 3:} Now we use the fact that $\alpha$ is an integral class,
and is the pullback of a class on an approximant that describes the
tiling out to a distance equal to the pattern-equivariance radius
$r$. Thus, for any patch of size greater than $r$, the integral of
$\alpha$ from one occurrence of the patch to another occurrence of the
same patch must be an integer.  In the setting of the previous step,
$\mu(x_2-x_1) \in \Z$.

{\bf Step 4:} There exists a value of $m$ such that $P_0$ appears in
all $m$-supertiles of $\Omega_{\sigma_{\mathbf{n}}}$, and also in all
supertiles of order greater than $m$. Since the word $cc$ appears in
the language of $\sigma_{\mathbf{n}}$, one can find $P_0$ in
corresponding locations of $m$-supertiles. That is, we can take
$x_2-x_1$ to be the length of an $m$-supertile of type $c$, or the
length of an $(m+1)$-supertile of type $c$ (which is the same as the
length of an $m$-supertile of type $a$). Since a tile in
$\Omega_{\sigma_{\mathbf{n}}}$ is actually an $n$-supertile in
$\Omega_F$, these are the widths $W_c$ and $W_a$ of $(m+n)$-supertiles
of type $c$ and $a$, respectively.  The upshot is that $\mu W_c$ and
$\mu W_a$ are integers.

We have already shown that $W_c$ and $W_a$ are relatively prime, so
there are integers $j$ and $k$ such that $1 = jW_c + kW_a$. But then
$\mu = j(\mu W_c) + k(\mu W_a)$ is an integer.

{\bf Step 5:} The involution of the fusion rule (\ref{DPV-fusion})
extends to an involution of $\Omega_F$ itself. Let $[\alpha']$ be the
pullback of $[\alpha]$ by this involution.  The Ruelle-Sullivan map
sends $[\alpha']$ to $(\nu, \mu)$. The previous arguments, applied to
$[\alpha']$, then show that $\nu \in \Z$.

\end{proof}

Theorem \ref{mainthm} says that the image of $H^1(\Omega_F)$ under the
Ruelle-Sullivan map is $\Z^2$. When it comes to the Kellendonk-Putnam and
Giordano-Putnam-Skau conjectures, to virtual eigenvalues, and to
possible bundle structures, that is the important result. However, it
is possible to say more.

\begin{thm} \label{icing-on-the-cake} $H^1(\Omega_F) = \Z^2$, and is
  generated by the classes of the constant forms $dx$ and $dy$.
 \end{thm}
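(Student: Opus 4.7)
The plan is to strengthen the argument of Theorem \ref{mainthm} to upgrade surjectivity of the Ruelle-Sullivan map (onto $\Z^2$) to an isomorphism. Since Theorem \ref{mainthm} gives $RS(H^1(\Omega_F)) = \Z^2$, and since $[dx]$ and $[dy]$ are integer classes sent by $RS$ to the standard basis of $\Z^2$, they generate a subgroup of $H^1(\Omega_F)$ isomorphic to $\Z^2$. It therefore suffices to show that $RS$ is injective on $H^1(\Omega_F; \Z)$.

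So let $[\alpha]$ be an integer class with $RS([\alpha]) = 0$; I want to conclude $[\alpha] = 0$. Specializing Steps 1 and 2 of the proof of Theorem \ref{mainthm} to $\mu = 0$ gives more than what was used there: along a horizontal fault line $\ell_+$, the strongly PE coboundary equation $\tilde\alpha_0 = d\tilde\beta$ in $\Omega_{\sigma_{\mathbf{n}}}$ specializes to $\int_{(x_1,N)}^{(x_2,N)} \alpha = 0$ whenever $(x_1,N)$ and $(x_2,N)$ are corresponding vertices of two occurrences of a sufficiently large 1-dimensional patch. Equivalently, $\phi_+(x) := \int_{(0,N)}^{(x,N)} \alpha$ is strongly pattern-equivariant along $\ell_+$. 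By the $b\leftrightarrow c$ involution in the fusion rule and Theorem \ref{DPVshears}, the analogous strong PE-ness holds along vertical fault lines.

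To conclude $[\alpha] = 0$ in $H^1(\Omega_F; \Z)$, I must show that $\phi(p) := \int_0^p \alpha$ is strongly PE globally, not merely along fault lines; this is the condition for $\alpha$ to be the coboundary of a strongly PE function, i.e.\ for its class to vanish in \v Cech cohomology. The hierarchical structure of $\Omega_F$ provides, for each scale $n$ and each point $p$ in any tiling, nearby horizontal and vertical boundaries between $n$-supertiles that play the role of scale-$n$ fault lines, with an induced 1-dimensional substitution (at the appropriate scale) governing the immediately adjacent row or column. The analysis of Steps 1-2 extends to these scale-$n$ fault lines, using that $H^1_{an}$ of each 1-dimensional boundary substitution space is trivial. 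Routing a path from $0$ to $p$ along a scale-$n$ horizontal fault line and a scale-$n$ vertical fault line, joined to $p$ by bounded transverse connectors (whose integrals contribute only amounts determined by the local tile pattern near $p$), then expresses $\phi(p)$ as a sum of strongly pattern-determined contributions.

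The main obstacle is making the multi-scale fault-line analysis precise: one must choose the scale $n$ as a function of $p$ so that the transverse connectors have uniformly bounded length while the scale-$n$ fault-line PE bound remains controlled, and verify that the strong PE estimates at different scales combine without unbounded accumulation. This is where the delicate interplay between the shear structure, the hierarchical scaling, and the triviality of $H^1_{an}$ for the boundary substitutions is essential. Once the global strong PE-ness of $\phi$ is established, $\alpha = d\phi$ for a strongly PE $\phi$, $RS$ is injective, and $H^1(\Omega_F;\Z) = \Z^2$, generated by $[dx]$ and $[dy]$.
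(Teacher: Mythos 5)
Your opening reduction is fine and is really just a reformulation of what the paper does: it suffices to show that an integral class $[\alpha]$ with $RS([\alpha])=(\mu,\nu)$ equals $\mu[dx]+\nu[dy]$, i.e.\ that $\int\alpha$ between corresponding points of two occurrences of a large patch is $\mu\Delta x+\nu\Delta y$. Your second paragraph also matches the paper's mechanism, with one caveat: what extends to rows of bottom-aligned $n$-supertiles everywhere in the hierarchy is the \emph{conclusion} of Steps 1--2 (namely $\tilde\alpha_0=\delta\tilde\beta$ with $\tilde\beta$ strongly PE on $\Omega_{\sigma_{\mathbf{n}}}$, transported by pattern equivariance), not the ``analysis'' itself: the boundedness argument of Step 1 genuinely needs the shear, hence a fault line between infinite-order supertiles, and has no analogue at an interior boundary between $n$-supertiles, where only finitely many prescribed offsets occur. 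Your phrasing suggests re-running the fault-line argument at every scale, which would not work.

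The genuine gap is the third paragraph, and you flag it yourself. First, the one-dimensional analysis does not evaluate $\int\alpha$ between arbitrary points of an aligned row; it only forces the integral to equal $\mu\Delta x$ between \emph{corresponding points of two occurrences of a sufficiently long one-dimensional patch} within such a row. So ``routing a path along a horizontal and a vertical fault line'' does not by itself compute $\phi(p)$; moreover a straight horizontal line exits any given bottom-aligned row once it crosses into a neighboring high-order supertile (the offsets are the whole point of the DPV), so one horizontal segment plus one vertical segment plus bounded connectors cannot reach a general $p$. What is actually needed --- and what the paper supplies in place of your ``main obstacle'' --- is a combinatorial connectivity lemma for the fusion rule (\ref{DPV-fusion}): all $n$-supertiles inside an $(n+1)$-supertile of type $b$, $c$ or $d$ are aligned, and inside $P_{n+1}(a)$ the top row, the right column and the lower-left $3\times3$ block are each internally aligned and linked at corners, so by induction any two occurrences of the reference patch lying in a common finite-order supertile are joined by a chain of aligned segments, on each of which the one-dimensional exactness applies between corresponding points. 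Second, this chain exists only when the two occurrences share a finite-order supertile, which fails precisely for the fault-line tilings used to produce the shears; the paper closes this by noting that a generic tiling has a single infinite-order supertile and that minimality makes the PE cohomology independent of the chosen tiling. Your proposal addresses neither point, so as written it proves the identity only along individual aligned rows and columns rather than globally.
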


 \begin{proof}  We must show that $[\alpha] = \mu [dx] + \nu[dy]$. That is, we must show that, for any two occurrences
 of any sufficiently large patch $P$, say at positions $(x_1,y_1)$ and $(x_2,y_2)$,
 \begin{equation} \label{integer-integral}
 \int_{(x_1,y_1)}^{(x_2,y_2)} \alpha = \mu(x_2-x_1) + \nu (y_2-y_1).
 \end{equation}
 By primitivity, we can restrict attention to the case that $P$ is an $n$-supertile (for some sufficiently large value of $n$) of
 type $d$.

 Our previous arguments show that equation (\ref{integer-integral})
 holds whenever $y_2=y_1$, when the path of integration is horizontal,
 and when all the supertiles that appear on the path are aligned on
 their bottom edges. Similar arguments, involving the topology of
 $\Omega_{\sigma_{\mathbf{s}}}$, apply when the path is horizontal and
 the supertiles are aligned along their top edges.  Likewise, the
 equation applies when the path is vertical and successive supertiles
 are aligned on either their right or left edges.  To complete the
 proof, we must show that there is a path from $(x_1,y_1)$ to
 $(x_2,y_2)$ in which successive supertiles are aligned on one side or
 another.

 This follows from the structure of the fusion rule
 (\ref{DPV-fusion}). All of the $n$-supertiles in an $(n+1)$ supertile
 of type $b$, $c$ or $d$ are aligned. Within an $(n+1)$-supertile of
 type $a$, all of the $n$-supertiles on the top row are aligned, all
 of the supertiles on the right column are aligned, and the
 nine-remaining supertiles are aligned. Since the upper right
 $n$-supertile is aligned with both the right column and the top row,
 and since the upper left supertile is aligned with both the top row
 and some of the elements of the lower left $3 \times 3$ block, it is
 possible to get from any $n$-supertile to any other within the
 $(n+1)$-supertile by following aligned edges.

\begin{center}
 \includegraphics[width=0.4\textwidth]{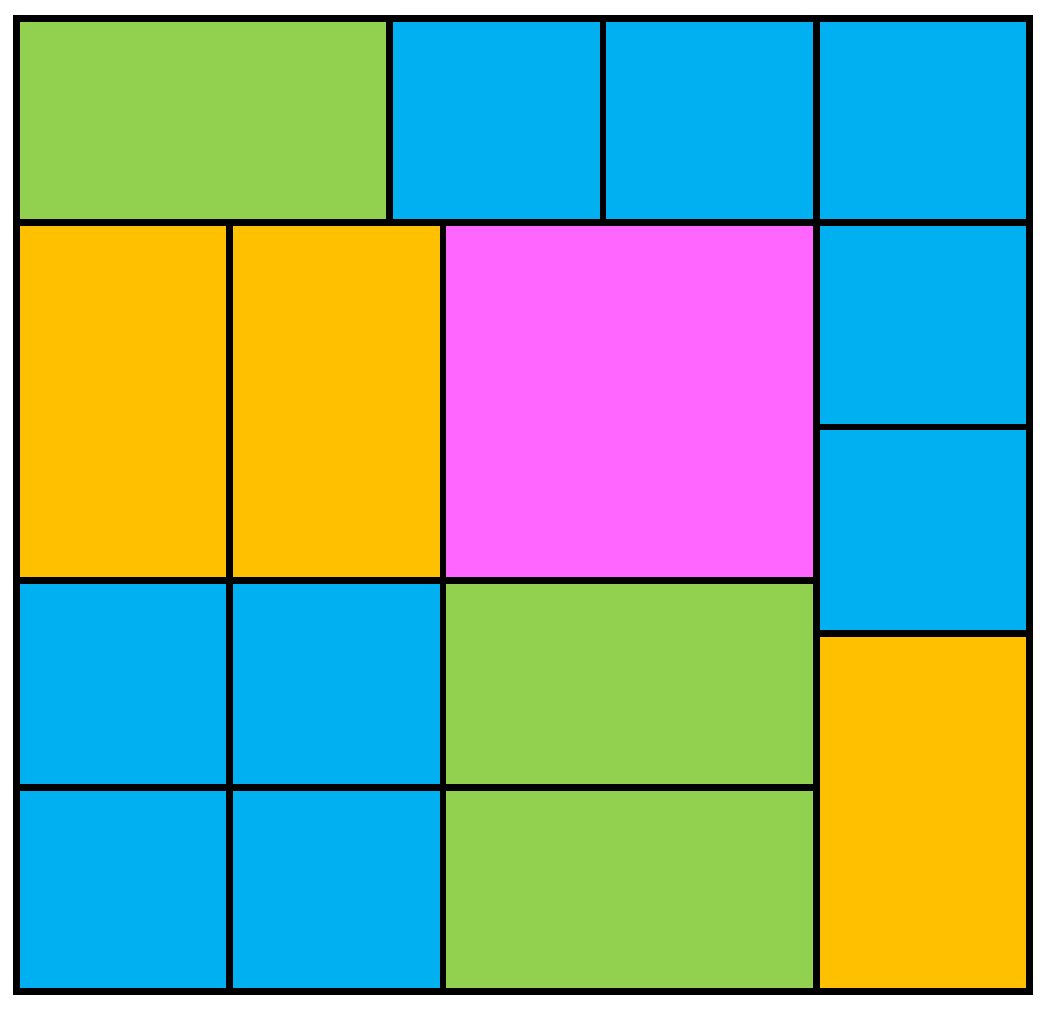} \hspace{.2in}
\end{center}

\begin{center}
{\small The $n$-supertiles within $P_{n+1}(a)$}
\end{center}

That $(n+1)$-supertile sits inside of an $(n+2)$-supertile.  Once at
the boundary of the $(n+1)$-supertile, one can get to any other
$(n+1)$ supertile in the same $(n+2)$-supertile by following edges of
$(n+1)$-supertiles. By induction, if an $n$-supertile sits inside of
an $n'$-supertile, for any $n'>n$, it is possible to go from the given
$n$-supertile to the boundary of the $n'$-supertile by following
aligned $n$-supertiles.

\begin{center}
 \includegraphics[width=0.6\textwidth]{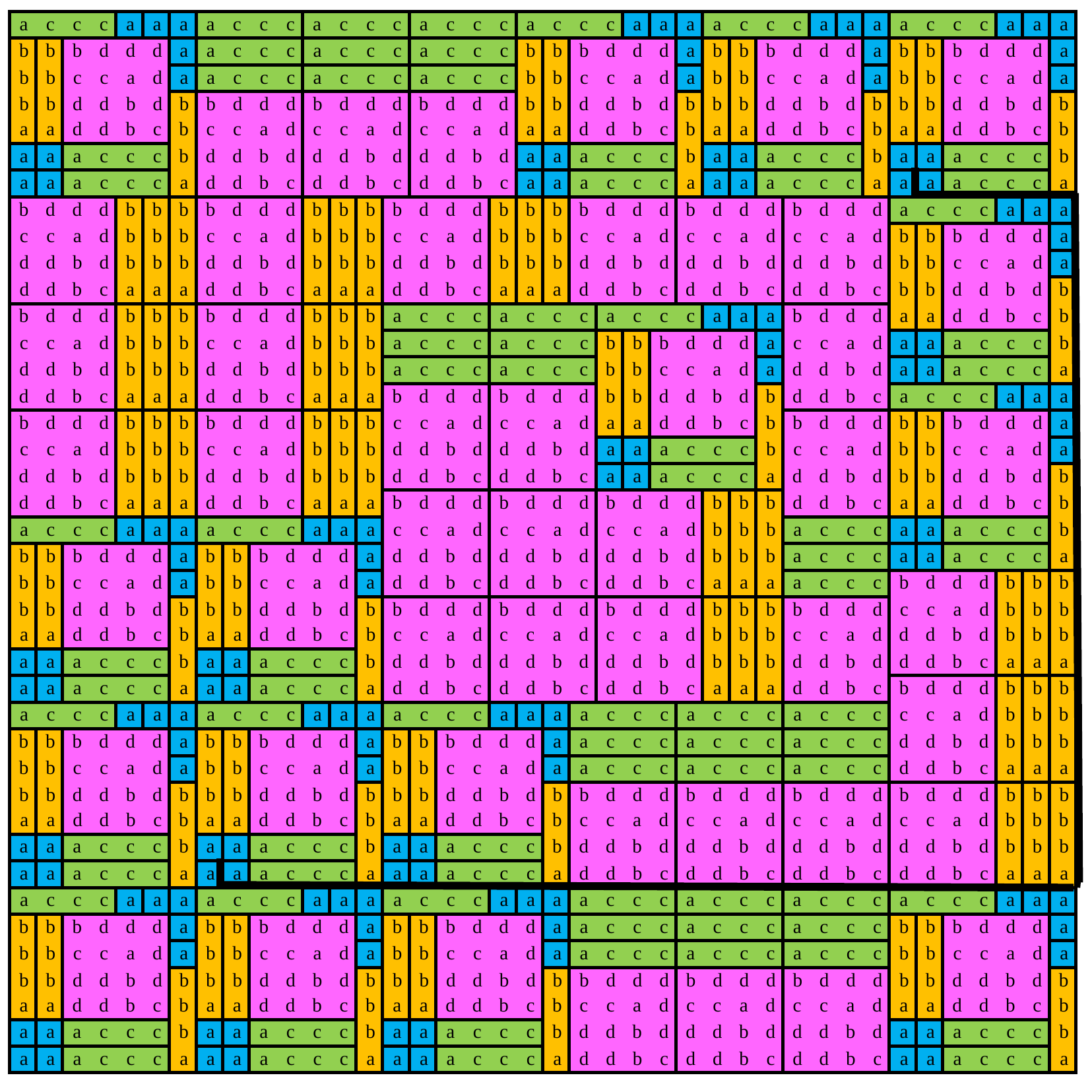} \hspace{.2in}
\end{center}

\begin{center}
{\small A path, in bold, along aligned  $1$-supertiles within $P_{4}(a)$}
\end{center}

If the two occurrences of $P$ lie in the same $n'$ supertile of some
order, it is thus possible to go from either one to a common boundary
point, and hence to go from one to the other, only following aligned
$n$-supertiles. On a generic tiling, this is always the case, in that
the only infinite-order supertile is the entire plane. Since equation
(\ref{integer-integral}) applies for every pair of patches in a
generic tiling, and since minimality implies that the
pattern-equivariant cohomology of every tiling is the same, equation
(\ref{integer-integral}) applies to all pairs of patches in all
tilings.

 \end{proof}

 It is worth noting three key features of Frank's DPV tiling that make the
 proofs of Theorems \ref{mainthm} and \ref{icing-on-the-cake}
 work. These features, or features that work just as well, are to be
 expected from a large class of FLC tilings of $\R^2$ with shears in
 both directions.  In fact, there are no known examples of tilings
 with shears that do {\em not} have these properties, giving support
 for Conjecture \ref{shears}.

\begin{enumerate}
\item In Frank's DPV tiling, the 1-dimensional tiling space along the
  lower boundary of supertiles had no asymptotically negligible (AN)
  classes. This was very convenient, but in cases where AN classes do
  exist, we can fall back on the following argument:

  It is impossible for an integral class to be asymptotically
  negligible without being trivial, since asymptotically negligible
  means that integrals are determined locally up to $\epsilon$, while
  integrality leaves no room for small uncertainty. Thus it is
  impossible to have an integral class that is a rational multiple of
  length plus something asymptotically negligible.

  Next, we must show that an integral class cannot be an {\em
    irrational} multiple of length plus an AN class. Equivalently, we
  must show that we cannot have two integral classes whose images in
  the so-called mixed cohomology (that is, $H^1$ mod AN) are
  irrational multiples of one another.

  This depends on number-theoretic properties of the stretching
  factor, specifically on the size of the Galois group of its
  splitting field.  We do not know whether our desired property is
  always true, but it is certainly true whenever the characteristic
  polynomial has more eigenvalues of magnitude 1 or bigger than of
  magnitude less than 1, and in particular is true whenever the
  stretching factor is a non-Pisot quadratic or cubic.
  Note that having a non-Pisot stretching factor was needed to get
  shears in the first place \cite{FR}, and that all constructions to date
  of tilings with shears have involved quadratic or cubic stretching
  factors.

\item The lengths of the supertiles have no common factor.  Again,
  this is a necessary condition for having shears.  If the lengths of
  all $n$-supertiles had a common factor, then the offsets between
  adjacent $n$-supertiles would have to be multiples of that common
  factor.

\item It was possible to find a path from any $n$-supertile to any
  other along aligned $n$-supertiles.  This is a general feature of
  DPV tilings, which are set up to be products of 1-dimensional
  fusions, only with a block (in this case the lower left $3 \times 3$
  block of $P_{n+1}(a)$) rotated or reflected. The block remains
  aligned, the rest of the supertile remains aligned, and it is
  possible to go from the block to the rest at a corner of the block.
\end{enumerate}

In summary, the Frank DPV tiling is
exceptionally simple to work with, and allows us to prove
Theorem \ref{nogo} directly. Other tilings with shears, constructed from
DPV's with non-Pisot stretching factors of degree 2 or 3,
are likewise counterexamples
to the conjectures of Giordano, Putnam and Skau. The question of whether
{\em all} subshifts with shears are counterexamples (c.f. Conjecture
\ref{shears}) remains open.

\begin{ex}
Our main counterexample had shears in both directions and $H^1=\Z^2$. The
following example, taken from \cite{FrankSadunCoho}, is a tiling space with
shears in only one direction. It admits small cocycles, but
the image of the Ruelle-Sullivan map is not dense.

Consider a 2-dimensional tiling with two tile types, $a$ and $b$, both of
which are unit squares. These form the basis of a fusion tiling with the
rule
$$ P_{n+1}(a) = \left [ \begin{matrix} P_n(a)&P_n(b)\cr P_n(b)&P_n(a)
\end{matrix} \right ], \quad
P_{n+1}(b) = \left [ \begin{matrix} P_n(a) & P_n(a) & P_n( a) \cr
    P_n(a) & P_n(a) & P_n(a) \end{matrix} \right ]. $$
As with the previous example, there is a horizontal shear.

Restricting to a single row, the substitution $a \to ab$ (or $ba$), $b
\to aaa$ has eigenvalues $(1 \pm \sqrt{13})/2$, both of which are
bigger than 1. The second eigenvalue controls the discrepancy in how
many $m$-supertiles of type $a$ (versus $b$) appear on either side of a
horizontal boundary between $n$-supertiles, where $n \gg m$. This discrepancy is
unbounded, growing as $|(1-\sqrt{13})/2|^n$,
and the lengths of $P_m(a)$ and $P_m(b)$ are relatively
prime, so $m$-supertiles meet with arbitrary offsets.  Taking a limit
as $m \to \infty$ proves that this tiling admits horizontal shears.

Also exactly as before,
the horizontal part of a closed PE cochain $\alpha$ has to be an integer
multiple of length, plus something exact. This is because the first
cohomology of the 1-dimensional tiling space with substitution $a \to ba$,
$b \to aaa$ has no asymptotically negligible classes, insofar as the
eigenvalues of $\left ( \begin{smallmatrix} 1 & 3 \cr 1 & 0 \end{smallmatrix}
\right )$ are both larger than 1.
Thus, if $(\lambda_x, \lambda_y)$
is in the image of the Ruelle-Sullivan map, $\lambda_x$ must be an integer.

However, $\lambda_y$ can be arbitrarily small. For each $n$, we
can construct a class that essentially counts $n$-supertiles in the vertical
direction. This class can be represented by a 1-cochain $\alpha$ that evaluates
to 0 on all horizontal edges, to 1 on the vertical edges of the bottom row of
each $n$-supertile, and to 0 on all other horizontal edges. Since there are
no vertical shears, these bottom rows can be identified in a strongly PE
manner. The Ruelle-Sullivan map sends this class to $(0, 2^{-n})$.

Since the image of RS contains small elements but is not dense, we can
write $\Omega$ as a bundle over tori with large volume, but not over tori
that are large in both directions.

The corresponding subshift does not admit small, positive cocycles with
respect to the obvious coordinates, since $(0,2^{-n})$ is not a positive
vector on the first quadrant. However, if we rotate our axes by 45 degrees
(i.e., restrict our $\Z^2$ action to combinations of $(1,1)$ and $(1,-1)$),
we obtain a $\Z^2$ action that does admit small, positive cocycles.
This implies the existence of small equivalence
relations. However, those relations were already manifest from the hierarchical
structure of the tiling.
\end{ex}


\end{document}